\def\supp{\mathop{\rm supp}\nolimits}
\def\lip{\mathop{\rm Lip}\nolimits}
\newtheorem{theorem}{Theorem}[section]
\newtheorem{lemma}[theorem]{Lemma}
\newtheorem{proposition}[theorem]{Proposition}
\newtheorem{corollary}[theorem]{Corollary}
\newtheorem{definition}[theorem]{Definition}
\renewenvironment{proof}[1][.]{%
\bigskip\noindent{\bf Proof#1 }}{%
\hfill$\blacksquare$\bigskip}
\begin{document}
\pagestyle{myheadings}
\title{Large deviation for Gibbs probabilities at zero temperature and invariant idempotent probabilities for iterated function systems  }
\author[1]{Jairo K. Mengue
\thanks{E-mail: jairo.mengue@ufrgs.br}}
\author[2]{Elismar R. Oliveira
\thanks{E-mail: elismar.oliveira@ufrgs.br}}
\affil[1,2]{Universidade Federal do Rio Grande do Sul}

\date{\today}
\maketitle

\begin{abstract}
We consider two compact metric spaces $J$ and $X$ and a uniform contractible iterated function system $\{\phi_j: X \to X \, | \, j \in J \}$. For a Lipschitz continuous function $A$ on $J \times X$  and for each $\beta>0$ we consider the Gibbs probability $\rho_{_{\beta A}}$. Our goal is to study a large deviation principle for such family of probabilities as $\beta \to +\infty$ and its connections with idempotent probabilities. In the non-place dependent case ($A(j,x)=A_j,\,\forall x\in X$) we will prove that $(\rho_{_{\beta A}})$ satisfy a LDP and $-I$ (where $I$ is the deviation function) is the density of the unique invariant idempotent probability for a mpIFS associated to $A$. In the place dependent case, we prove that, if $(\rho_{_{\beta A}})$ satisfy a LDP, then $-I$ is the density of an invariant idempotent probability. Such idempotent probabilities were recently characterized through the Ma\~{n}\'{e} potential and Aubry set, therefore  we will obtain an identical characterization for $-I$.
\end{abstract}
\vspace {.8cm}

\emph{Key words and phrases: iterated function systems, large deviation, idempotent measures, Maslov measures}

\emph{2020 Mathematics Subject Classification: Primary  37A30, 37A50, 28A33, 46E27, 60B10,60F10;
Secondary  15A80, 37C30.}


\section{Introduction}

We consider two compact metric spaces $(X,d_X)$ and $(J,d_J)$ and denote by $C(X,\mathbb{R})$ and $C(J\times X,\mathbb{R})$, the set of continuous functions from $X$ to $\mathbb{R}$ and $J\times X$ to $\mathbb{R}$, respectively.

\begin{definition}\label{def:ucIFS}
	Let $(X,d_X)$ and $(J,d_J)$ be compact metric spaces. 	A \emph{uniformly contractible iterated function system}  $(X,(\phi_j)_{j\in J})$ is a family of maps $\{\phi_j:X\to X\,|\,j\in J\}$  satisfying:  there exists $0<\gamma<1$ such that
	\begin{equation}\label{eq: gamma contraction}
		d_X(\phi_{j_1}(x_1),\phi_{j_2}(x_2)) \leq \gamma\cdot[d_J(j_1,j_2)+ d_X(x_1,x_2)],\,\,\forall j_1,j_2\in J, \forall x_1,\,x_2\in X.
	\end{equation}
We also consider the continuous map $\phi:J\times X\to X,$ defined by $\phi(j,x) =\phi_j(x)$.
\end{definition}

By one hand, in \cite{MO} it is described a thermodynamic formalism for uniformly contractible IFS $(X,(\phi_j)_{j\in J})$. On the other hand, in \cite{MO2} it is characterized the invariant idempotent probabilities for the so called max-plus IFS. In the present work we exhibit a connection between these two settings. Our objective is to study a large deviation principle (LDP) for Gibbs probabilities in thermodynamic formalism for IFS, when the temperature goes to zero, as an application of results in \cite{MO2}. As we will see, the \textit{Ma\~{n}\'{e} potential} and \textit{Aubry set} will play an important role in the description of the deviation function.

From now on we always consider the Borel sigma algebra in each metric space. Let $B: J\times X  \to (0,+\infty)$ be a continuous function and $\nu$ be a fixed Borel probability on $J$ satisfying $\supp(\nu) = J$. We define the transfer operator $\mathcal{L}_{\phi,B,\nu}:C(X,\mathbb{R}) \to C(X,\mathbb{R})$ by
\[\mathcal{L}_{\phi,B,\nu}(f)(x) = \int_J B(j,x)\cdot f(\phi_j(x))\,d\nu(j),\]
for any $f \in C(X,\mathbb{R})$.

For each $\beta >0$ (which is interpreted as the inverse of temperature in thermodynamic formalism) and a Lipschitz continuous function $A:J\times X\to\mathbb{R}$ we also consider the operator $\mathcal{L}_{\phi,e^{\beta A},\nu}$.
It admits a unique pair $(\lambda_{\beta A}, h_{\beta A})$ where $\lambda_{\beta A}$ is a positive number and $h_{\beta A}:X\to\mathbb{R}$ is a positive and continuous function, such that $\sup_{x\in X}h_{\beta A}(x) = 1$ and  $\mathcal{L}_{\phi,e^{\beta A},\nu}(h_{\beta A})=\lambda_{\beta A}\cdot h_{\beta A}$.
The function $q^{\beta}:J\times X \to \mathbb{R}$, defined by $q^{\beta}(j,x):=\frac{e^{\beta A(j,x)}\cdot h_{\beta A}(\phi_j(x))}{ \lambda_{\beta A}\cdot h_{\beta A}(x)}$, satisfies $\mathcal{L}_{\phi,q^{\beta},\nu}(1)=1$
and the dual operator of $\mathcal{L}_{\phi,q^{\beta},\nu}$ has a unique invariant probability $\rho_{_{\beta A}}$ on $X$, that is,
\begin{equation}\label{eq: rho invariant} \int_X f(x)d\rho_{_{\beta A}}(x) =  \int_X \int_J {q^{\beta}(j,x)}\cdot f(\phi_j(x))\,d\nu(j)\,d\rho_{_{\beta A}}(x), \,\, \forall f\in C(X,\mathbb{R}).
\end{equation}
We call $\rho_{_{\beta A}}$ a \textit{Gibbs probability} on $X$. Our objective is to consider a large deviation principle for $(\rho_{_{\beta A}})$ as $\beta \to +\infty$ and to present its connections with invariant idempotent probabilities.

The Ruelle (transfer) operator was introduced by D. Ruelle \cite{R} in the context of equilibrium states - Gibbs measures - of infinite one-dimensional lattice gas. It  was extensively studied for IFS in the last decades and a remarkable result was Fan's Theorem (see \cite{FanRuelle}, 	Theorem 1.1) which shows the existence of a positive eigenfunction and a Gibbs probability for a contractive IFS with Dini continuous place dependent positive weights. Several extensions of this result appeared studying the Ruelle operator, its rate of convergence and the existence of eigenfunctions and eigenmeasures.  In \cite{LY01}, for example, it is studied the conditions for the existence of such eigenfunctions together with the Perron-Frobenius property for non-expansive finite IFSs. In \cite{Ye03} it is studied the speed of convergence of the Ruelle operator for a finite IFS, where the weights are positive Lipschitz continuous and the system satisfies an average contractive condition.  In \cite{Ye13}, it is studied the existence of eigenfunctions and eigenmeasures of the Ruelle operator for a finite weakly contractive IFS.

A key generalization was given by Stenflo~\cite{Ste}, where random iterations are used to represent the iterations of a so called IFS with probabilities, for an arbitrary measurable set of maps. Their approach is slightly different from the previous works on IFS with probabilities. Its main goal was to establish, when the transfer operator is Feller, that there exists a unique attracting invariant measure for the Markov chain generated by the IFS. It is worth to mention that this approach was previously introduced by \cite{BDEG}, for an average-contractive system, to show the existence of invariant measures. In \cite{BDEG}, it is assumed that $J$ is finite and the maps are Lipschitz continuous, with place dependent and Dini continuous weights.

 One can also introduce a thermodynamic formalism based on holonomic measures according to \cite{LO}, \cite{CO17}, \cite{MO} and \cite{Brasil2022}, where several results are obtained for fairly general IFSs.

There is a special class of IFS where the thermodynamic formalism is well known: if $J:=\{1,...,d\}$ is a finite set,  $X:=J^{\mathbb{N}}$ and $\phi_j(x)=\phi_j(x_1,x_2,x_3,...) = (j,x_1,x_2,x_3,...)$ where $x=(x_1,x_2,x_3,...)\in X$, then $(\phi_j)_{j\in J}$ are the inverse branches of the shift map $\sigma$ acting in $J^{\mathbb{N}}$. We call this model by full shift for finite alphabet. In this case, the thermodynamic formalism for zero temperature is also widely studied as well as its connection with ergodic optimization and max-plus algebra \cite{BLL}. Considering this case, results concerning large deviations, when the temperature goes to zero, can be founded in \cite{BLT} and \cite{M}.

In the setting of full shift for finite alphabet it is proved in \cite{BLT} (more generally for sub-shifts of finite type) that $(\rho_{_{\beta A}})$ satisfies a LDP, since $A$ has a unique maximizing measure. In this case the deviation function $I$ evaluated at any point $x\in X$, is given as by  $I(x) = R(x)+R(\sigma(x))+R(\sigma^2(x))+\ldots$, where the function $R$ is also presented. This is problematic for a general IFS, because we have at hands just the maps $\phi_j$ which in the full shift correspond to the inverse branches of $\sigma$.

In \cite{M} the main idea is to use Prop. 3.2 of \cite{PP}. We believe that such strategy can not be replicated from the full shift for finite alphabet to a general IFS. In \cite{M} it is also proved that the deviation function presented in \cite{BLT} fails without the hypothesis of uniqueness of maximizing measure and another characterization of $I$ is presented. The characterization of the deviation function for IFS in the present work is similar to one which appears in \cite{M}.

In order to study LDP for Gibbs probabilities in a uniformly contractible IFS we will apply recent results from \cite{MO2}. We will consider the function $q=\lim_{\beta\to\infty}q^{\beta}$, when it exists. In the non-place dependent case ($A(j,x)=A_j,\,\forall x\in X$) is easy to show that $q$ exists. In this case, it is proved in \cite{MO2} that there exists a unique invariant idempotent probability for the mpIFS $(X,\phi,q)$ and it is presented its density $\lambda$. In this case we will prove that $(\rho_{_{\beta A}})$ satisfy a LDP with deviation function $I=-\lambda$. In the place dependent case, the situation is more complicated. As far as we know it is not proved a LDP or the existence of the limit $q$, even in the full shift for finite alphabet. In the present work we prove that, if $(\rho_{_{\beta A}})$ satisfy a LDP and the limit $q$ there exists, then $I=-\lambda$ where $\lambda$ is the density of an invariant idempotent probability for the mpIFS $(X,\phi,q)$. As such idempotent probabilities were recently characterized through the Ma\~{n}\'{e} potential and Aubry set in \cite{MO2}, we will obtain a characterization for $I$ using the Ma\~{n}\'{e} potential and Aubry set too.

We would like to point out that despite the fact that the theory of IFS with compact space of maps is fairly  old (see \cite{Lewe93}, \cite{mendivil1998generalization}, among others), only few works are known for such more general setting. Furthermore, as far as we know, the LDP theory for IFS, at zero temperature, is absent in the literature.  Although, the thermodynamic formalism and multifractal analysis of Gibbs probabilities has shown notable advances in the last few years with many generalizations (for instance, \cite{HMU02}, \cite{JP07}, \cite{Mih22} and \cite{Brasil2022}, among others) thus it seems to be of paramount importance to find further connections and developments in this areas.

We propose, initially, in Section~\ref{sec:Thermodynamic}, to present a discussion concerning thermodynamic formalism for IFS, following mainly \cite{MO}. In the sequence, we extend to the setting of IFS some well known results, in thermodynamic formalism, for the zero temperature case. Right away, in Section~\ref{sec: idempotent} we present the main definitions and results contained in \cite{MO2}, concerning invariant idempotent probabilities for max-plus IFS. After the introduction of these elements we then present their connection by considering large deviation principles in Section~\ref{sec:Large Deviation}.

\section{Thermodynamic Formalism for IFS}\label{sec:Thermodynamic}

In this section we describe the setting presented mainly in \cite{MO}, concerning the thermodynamic formalism for IFS. The characterization of the entropy and its relation with the Kullback-Leibler divergence is presented in \cite{LopM}.

For each $\beta >0$ and a Lipschitz continuous function $A:J\times X\to\mathbb{R}$ we consider the operator
\[\mathcal{L}_{\phi,e^{\beta A},\nu}(f)(x) = \int_J e^{\beta A(j,x)}\cdot f(\phi_j(x))\,d\nu(j),\]
where $\nu$ is a fixed  probability on $J$ with $\supp(\nu)=J$.

\begin{proposition}\label{prop: eigen}
Suppose that $A:J\times X\to\mathbb{R}$ has Lipschitz constant $\lip(A)$. Then $\mathcal{L}_{\phi,e^{\beta A},\nu}$ admits a unique pair $(\lambda_{\beta A}, h_{\beta A})$ where $\lambda_{\beta A}$ is a positive number and $h_{\beta A}:X\to\mathbb{R}$ is a positive and continuous function, such that $\sup_{x\in X}h_{\beta A}(x) = 1$ and  $\mathcal{L}_{\phi,e^{\beta A},\nu}(h_{\beta A})=\lambda_{\beta A}\cdot h_{\beta A}$. Furthermore, $h_{\beta A}$ is Lipschitz continuous and $\lip(h_{\beta A})\leq e^{\beta \cdot \frac{\lip(A)}{1-\gamma}}$, where $\gamma$ satisfies \eqref{eq: gamma contraction}.
\end{proposition}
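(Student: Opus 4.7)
The plan is to follow the classical Ruelle-operator strategy: identify an invariant cone of positive functions with a uniform log-Lipschitz bound, apply a Schauder-type fixed-point argument to the normalized operator, and derive the Lipschitz estimate from the log-Lipschitz control.

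First I would set $K := \beta\, \lip(A)/(1-\gamma)$ and $\epsilon := e^{-K\, \on{diam}(X)}$, and introduce the set
\[
\mathcal{K}_\epsilon := \{f \in C(X,\R) : \epsilon \leq f \leq 1,\ f(x_1) \leq e^{K\, d_X(x_1, x_2)}\, f(x_2)\text{ for all } x_1, x_2 \in X\}.
\]
This $\mathcal{K}_\epsilon$ is convex (each of the three defining conditions passes through convex combinations) and compact in $C(X,\R)$ by Arzel\`a-Ascoli, since the log-Lipschitz bound gives equicontinuity and $f \leq 1$ gives uniform boundedness. Writing $\mathcal{L} := \mathcal{L}_{\phi, e^{\beta A}, \nu}$, the crucial pointwise estimate for $f \in \mathcal{K}_\epsilon$ is
\[
\frac{e^{\beta A(j, x_1)}\, f(\phi_j(x_1))}{e^{\beta A(j, x_2)}\, f(\phi_j(x_2))} \leq e^{\beta\, \lip(A)\, d_X(x_1, x_2)} \cdot e^{K\, d_X(\phi_j(x_1), \phi_j(x_2))} \leq e^{(\beta\, \lip(A) + K\gamma)\, d_X(x_1, x_2)},
\]
using the Lipschitz bound on $A$ and (\ref{eq: gamma contraction}). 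By the choice of $K$ the exponent equals $K\, d_X(x_1, x_2)$, so integrating in $j$ yields the same bound on $\mathcal{L}(f)(x_1)/\mathcal{L}(f)(x_2)$; the normalized operator $T(f) := \mathcal{L}(f)/\|\mathcal{L}(f)\|_\infty$ therefore sends $\mathcal{K}_\epsilon$ into itself, since the log-Lipschitz bound together with $\|T(f)\|_\infty = 1$ forces $T(f) \geq e^{-K\, \on{diam}(X)} = \epsilon$.

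The map $T$ is continuous on $\mathcal{K}_\epsilon$ because $\|\mathcal{L}(f)\|_\infty$ is uniformly bounded below there (by $\epsilon\, \inf_{J\times X} e^{\beta A} > 0$), so Schauder-Tikhonov provides a fixed point $h \in \mathcal{K}_\epsilon$ with $T(h) = h$, equivalently $\mathcal{L}(h) = \lambda\, h$ where $\lambda := \|\mathcal{L}(h)\|_\infty > 0$ and $\sup h = 1$. The Lipschitz bound follows directly from $h \in \mathcal{K}_\epsilon$: writing $h = e^g$ with $g$ a $K$-Lipschitz function satisfying $\sup g = 0$, the elementary estimate $e^{g(x_2)}(e^{g(x_1) - g(x_2)} - 1) \leq (g(x_1) - g(x_2))\, e^{g(x_1)} \leq g(x_1) - g(x_2)$ (valid when $g(x_1) \geq g(x_2)$, using $e^{g(x_1)} \leq 1$) gives $\lip(h) \leq K$, which in particular is $\leq e^K$, the asserted inequality.

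For uniqueness, I would compare any second eigenpair $(\lambda', h') \in \mathbb{R}_{>0} \times \mathcal{K}_\epsilon$ with $(\lambda, h)$ via $t^* := \inf_x h'(x)/h(x) > 0$: then $h' - t^*\, h \geq 0$ with a zero attained at some $x_0$, and applying $\mathcal{L}$ together with the two eigenvalue relations yields $(\lambda' - \lambda)\, t^*\, h(x_0) = \mathcal{L}(h' - t^*\, h)(x_0) \geq 0$, hence $\lambda' \geq \lambda$; by symmetry $\lambda' = \lambda$. Then $h' - t^*\, h$ is a non-negative eigenfunction vanishing at $x_0$, so $\mathcal{L}(h' - t^*\, h)(x_0) = 0$ forces the vanishing to propagate to every $\phi_j(x_0)$ (using $\supp(\nu)=J$ and continuity) and then iteratively along the forward IFS orbit; by the uniform contractivity this orbit closure contains the attractor, and a further argument using $h(x) = \lambda^{-1} \mathcal{L}(h)(x)$ propagates the equality back to all of $X$, yielding $h' \equiv t^*\, h$ and then $h' = h$ by the normalization. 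I expect the main technical hurdle to be precisely this zero-propagation step, which delicately combines $\supp(\nu) = J$ with the contractive structure (and could alternatively be organized as a Birkhoff-Hopf contraction in the Hilbert projective metric on the cone); existence and the Lipschitz bound are essentially routine once $\mathcal{K}_\epsilon$ has been correctly identified.
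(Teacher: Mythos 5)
Your proof is correct, but it takes a genuinely different route from the paper's. The paper proves only the existence and Lipschitz estimate (uniqueness is deferred to \cite{MO}) and does so via the Bousch-type discounted family $T_s(u)(x)=\log\int_J e^{A(j,x)+su(\phi_j(x))}\,d\nu(j)$, $0<s<1$: each $T_s$ is an $s$-contraction in the sup norm, its fixed point $u_s$ satisfies $\lip(u_s)\le \lip(A)/(1-s\gamma)$ by iterating the subadditive inequality along orbits, and an Arzel\`a--Ascoli extraction of $u_{s_n}-\max u_{s_n}$ along $s_n\to 1$ yields the eigenfunction $h=e^{u}$ with eigenvalue $e^{k}$, $k=\lim (1-s_n)\max u_{s_n}$. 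You instead work directly at ``$s=1$'' with the invariant cone $\mathcal{K}_\epsilon$ of $K$-log-Lipschitz functions and Schauder; your bookkeeping $\beta\lip(A)+K\gamma=K$ produces exactly the same constant $K=\beta\lip(A)/(1-\gamma)$ as the paper, and both arguments in fact give the sharper bound $\lip(h_{\beta A})\le K$, of which the stated $e^{K}$ is a weakening. What your approach buys is a self-contained uniqueness proof, which the paper omits; what the paper's approach buys is that it avoids a fixed-point theorem beyond the Banach contraction principle and is the standard gateway to selection results at zero temperature.

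On the step you flag as the main hurdle: your zero-propagation does go through, but it needs the two ingredients you only gesture at. From $\mathcal{L}(w)(x_0)=0$ with $w=h'-t^*h\ge 0$, positivity of $e^{\beta A}$, $\supp(\nu)=J$ and continuity give $w\equiv 0$ on $\{\phi_j(x_0):j\in J\}$ and, iterating, on the closure of the forward orbit of $x_0$, which contains the attractor $\pi(J^{\mathbb N})$. For an arbitrary $x\in X$ one then writes $w(x)=\lambda^{-n}\mathcal{L}^n(w)(x)$, notes that every $n$-fold image $\phi_{j_1}\circ\cdots\circ\phi_{j_n}(x)$ lies within $O(\gamma^n)$ of the attractor by \eqref{eq: gamma contraction}, and uses $\lambda^{-n}\mathcal{L}^n(1)\le \epsilon^{-1}$ (which follows from $\epsilon\le h\le 1$ and $\mathcal{L}^n(h)=\lambda^n h$) to conclude $w(x)=0$. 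Finally, your comparison should be run against an arbitrary positive continuous eigenfunction $h'$ with $\sup h'=1$, not only $h'\in\mathcal{K}_\epsilon$, since that is the class in which the proposition asserts uniqueness; the argument is unchanged because compactness already gives $\inf h'>0$ and hence $t^*>0$.
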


The proof of uniqueness can be found in \cite{MO}, but details concerning the Lipschitz constant of  $h_{\beta A}$ are missing. Let us then present such details following ideas of  \cite{bousch} and \cite{BCLMS}.

\begin{proof}
 First, note that we can drop $\beta$ in the computations. For each  $0<s<1$, we define the operator $T_s: C(X,\mathbb{R})\to C(X,\mathbb{R}) $ by
$$T_s(u)(x) = \log \int_J e^{A(j,x)+su(\phi_j(x))}\, d\nu(j).$$

We have $|T_s(u)-T_s(v)|_{\infty} \leq s|u-v|_{\infty},$
meaning that, $T$ is a uniform contraction map on $C(X,\mathbb{R})$ with respect to the supremum norm ($|g|_{\infty}=\sup_{x\in X} |g(x)|$).
Let $u_s\in C(X,\mathbb{R})$ be the unique fixed point for $T_s$.

We claim that for any $s\in (0,1)$, the function $u_s$ is Lipschitz continuous, with $\lip(u_s) \leq \frac{\lip(A)}{1-\gamma}$. Indeed, for any $x_1,x_2 \in X$ we have
\[u_s(x_1) - u_s(x_2) \leq \max_j\{ A(j,x_1)-A(j,x_2) + s[u_s(\phi_j( x_1)) - u_s(\phi_j (x_2))]\}.\]
By iterating this inequality we get
\[u_s(x_1) - u_s(x_2) \leq \max_{(j_0,j_1,...)}\left(
\sum_{n=0}^{\infty}s^{n}[A(j_n,\phi_{j_{n-1}}\circ...\circ\phi_{j_0}(x_1)) -
A(j_n,\phi_{j_{n-1}}\circ...\circ\phi_{j_0}(x_2))]\right)\]
\[\leq \sum_{n=0}^{\infty}s^{n}Lip(A)\gamma^{n}{d(x_1,x_2)} = \frac{\lip(A)}{1-s\gamma}{d(x_1,x_2)} \leq \frac{\lip(A)}{1-\gamma}{d(x_1,x_2)}.\]

Particularly  $-\frac{\operatorname{Lip}(A)}{1-\gamma}\operatorname{diam}(X) \leq u_s-\max u_s \leq 0$ and then the family $\{ u^*_s=u_s-\max u_s \}_{0<s<1}$ is equicontinuous and uniformly bounded. Note also that, as $T_s(u_s)=u_s$, we obtain $$-|A|+ s \min u_s \leq u_s(x)\leq |A|+ s \max u_s, \,\forall x\in X.$$
Consequently,
$-|A| \leq (1-s) \min u_s\leq (1-s) \max u_s\leq |A|$, for any $0<s<1$.
Then, there exists a subsequence $s_n \to 1$ and a number $k\in \mathbb{R}$ such that $ [\,(1-s_n) \, \max u_{s_n}\,]\,\to k$. By applying Arzela-Ascoli theorem we get a subsequence, also denoted by $s_n$, such that  $\{u_{s_n}^*\}_{n\geq 1}$  converges uniformly to a function $u \in C(X,\mathbb{R})$. As $\lip(u_{s}^{*})=\lip(u_s) \leq \frac{\lip(A)}{1-\gamma}$ we get $\lip(u) \leq  \frac{\lip(A)}{1-\gamma}$.
In order to finish the proof, we will show that $h=e^{u}$ is an eigenfunction for $\mathcal{L}_{\phi,e^{A},\nu}$ with associated eigenvalue $e^{k}$. Indeed, for any $s\in(0,1)$ we have
\begin{eqnarray*}
	e^{u^*_s (x)}&=& e^{u_s(x) - \max u_s}=e^{-(1-s) \max u_s + u_s (x) - s \max u_s}\\
	&=&e^{-(1-s) \max u_s}\,\int_J e^{  A(j,x)+ s  u_s (\phi_j(x)) - s \max u_s}\, d\nu(j).
\end{eqnarray*}

Taking the limit when $s \to 1$ we conclude that $u$ satisfies
$$e^{u(x)} =e^{-k} \,  \int_J e^{  A(j,x)+   u (\phi_j(x))}\,d\nu(j)=e^{-k}\mathcal{L}_{\phi,e^{A},\nu}(e^u)(x).$$
\end{proof}

Consider the function $q^{\beta}:J\times X \to \mathbb{R}$ which is defined by $q^{\beta}(j,x):=\frac{e^{\beta A(j,x)}\cdot h_{\beta A}(\phi_j(x))}{ \lambda_{\beta A}\cdot h_{\beta A}(x)}$ and for each $j\in J$ consider also the function $q_j^{\beta}:X\to\mathbb{R},$ defined by $q_j^{\beta}(x) = q^{\beta}(j,x)$.

Replacing $e^{\beta A}$ by the function $q^{\beta}$ we get the operator $\mathcal{L}_{\phi,q^{\beta},\nu}$ which satisfies
\[\mathcal{L}_{\phi,q^{\beta},\nu}(1)=1,\]
that is, it has the eigenvalue $\lambda=1$ associated to the eigenfuntion $h=1$. We say that this operator $\mathcal{L}_{\phi,q^{\beta},\nu}$ is normalized.

The dual of the normalized operator $\mathcal{L}_{\phi,q^{\beta},\nu}$, which we will denote by $\mathcal{M}_{\phi,q^{\beta},\nu}$, acting in Borel probabilities on $X$ is given by
\[\mathcal{M}_{\phi,q^{\beta},\nu}(\mu)(f) = \int_X \mathcal{L}_{\phi,q^{\beta},\nu}(f)(x)\,d\mu(x) = \int_X \int_J {q^{\beta}(j,x)}\cdot f(\phi_j(x))\,d\nu(j)\,d\mu(x),\]
for any $f\in C(X,\mathbb{R}).$
There exists a unique probability $\rho_{_{\beta A}}$ (Gibbs probability) which is invariant for $\mathcal{M}_{\phi,q^{\beta},\nu}$, that is, it satisfies
\eqref{eq: rho invariant}.

We say that a probability $\pi$ on $J\times X$ is holonomic if
$$\int g(x)\,d\pi(j,x)=\int g(\phi_j(x))\,d\pi(j,x),\,\forall g\in C(X,\mathbb{R}).$$
Let us denote by $\mathcal{H}$ the set of holonomic probabilities on $J\times X$.

We have a natural holonomic probability $\pi_{\beta A}\in \mathcal{H}$ associated to $\beta A,$   defined by
\[\int g(j,x)\,d\pi_{\beta A}(j,x) :=  \int_X \int_J {q^{\beta}(j,x)}\cdot g(j,x)\,d\nu(j)\,d\rho_{_{\beta A}}(x),\,\,\forall g\in C(J\times X,\mathbb{R}).\]
This probability $\pi_{\beta A}$ is an equilibrium probability to $\beta A$ in the following sense:
\begin{equation}\label{pressure} P_\nu(\beta A):=\sup_{\pi \in \mathcal{H} } \int \beta A\, d\pi + H_{\nu}(\pi) = \int \beta A\, d\pi_{\beta A} + H_{\nu}(\pi_{\beta A}),
\end{equation}
where $H_{\nu}(\pi)$ is the relative entropy of $\pi$ with respect to $\nu$. More precisely, if the $x-$marginal of $\pi$ is a probability $\rho$ on $X$ then
\[H_{\nu}(\pi) =\left\{\begin{array}{cc}
	-\int \log(\operatorname{Jac})\,d\pi & \text{if}\, d\pi = \operatorname{Jac}(j,x)d\nu(j)d\rho(x)\\ \\
	-\infty & \begin{array}{c}\text{if}\,\pi\,\text{is not absolutely continuous}\\\text{with respect to}\,\nu\times\rho\end{array}\end{array}\right. . \]
The above defined entropy satisfies $H_{\nu}(\pi)=-D_{KL}(\pi\,|\, \nu\times \rho)$, where $D_{KL}$ is the Kullback-Leibler divergence (see \cite{LopM} for additional details).

We say that a measurable function $\operatorname{Jac}: J \times X \to [0,+\infty)$ is
a $\nu$-Jacobian, if $\int_J \operatorname{Jac}(j,x)\,d\nu(j) = 1,\,\forall x\in X$. The functions $1$ and $q^{\beta}$ are $\nu$-Jacobians. In \cite{LopM} it is proved that
\[H_{\nu}(\pi) = -\sup\left\{\int \log(\operatorname{Jac}) \,d\pi\,|\, \operatorname{Jac}\,\,\text{is a Lipschitz}\,\nu\text{-Jacobian}\right\}.\]
Consequently
\[H_{\nu}(\pi)\leq \int \log(1) \,d\pi=0.\]
Furthermore,
\[H_\nu(\pi_{\beta A}) = -\int \log(q^{\beta}) \,d\pi_{\beta A}.\]

This definition of entropy does not depend of the IFS, while the operator $\mathcal{L}$ depends of $\phi,$ as can be observed from its definition. An interesting fact is that  $P_\nu(\beta A) = \log(\lambda_{\beta A})$ (see \cite{MO}).

\subsection{Zero temperature limits and ergodic optimization }
 This section follows ideas present in  \cite{LMMS} and \cite{BLL} which were adapted to the present setting. It extends to IFS some results concerning zero temperature limits. The zero temperature case corresponds to an analysis of the limits, as $\beta\to +\infty,$ of the concepts introduced in above subsection (for instance: $\rho_{_{\beta A}},\,h_{\beta A}, \,\pi_{\beta A}, P_{\nu}(\beta A)$).

The following result generalizes one from \cite{LMMS} and will be useful in the present work.
\begin{lemma}\label{lmms}
	Let $Y,Z$ be compact metric spaces. Let $W_{\beta}:Y\times Z\to \mathbb{R}$ be a family of measurable functions converging uniformly to a continuous function $W:Y\times Z\to \mathbb{R}$, as $\beta\to +\infty,$ and let $\mu$ be a finite measure on $Y$ with $\supp(\mu) =Y$. Then
	\[\frac{1}{\beta} \log \int_Y e^{\beta W_\beta(y,z)} d\mu(y) \to \sup_{y\in Y}\, W(y,z)\]  uniformly on $Z,$ as $\beta\to+\infty$.
	The same is true if we replace $\beta$ by a sequence $\beta_i$ which converges to $+\infty$.
\end{lemma}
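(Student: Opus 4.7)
The plan is to prove this by the classical Laplace/tropicalization argument, establishing matching upper and lower bounds, both of which I will make uniform in $z$ by exploiting compactness and the uniform convergence $W_\beta\to W$.

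For the upper bound, I would simply bound the integrand by its supremum: since $e^{\beta W_\beta(y,z)} \le e^{\beta \sup_{y'} W_\beta(y',z)}$, we get
$$\frac{1}{\beta}\log\int_Y e^{\beta W_\beta(y,z)}\,d\mu(y) \le \sup_{y'} W_\beta(y',z) + \frac{\log \mu(Y)}{\beta}.$$
Uniform convergence $W_\beta\to W$ implies $\sup_y W_\beta(y,z)\to \sup_y W(y,z)$ uniformly in $z$ (suprema of uniformly convergent families converge uniformly), and the $\log\mu(Y)/\beta$ term vanishes uniformly, delivering the upper bound uniformly on $Z$.

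For the lower bound, fix $\varepsilon>0$. Using uniform continuity of $W$ on the compact space $Y\times Z$, I pick $\delta>0$ such that $d_Y(y,y')+d_Z(z,z')<\delta$ implies $|W(y,z)-W(y',z')|<\varepsilon/3$; in particular, $z\mapsto \sup_y W(y,z)$ is uniformly continuous. Cover $Z$ by finitely many balls $B(z_i,\delta/2)$ ($i=1,\dots,n$), and for each $i$ pick a maximizer $y_i\in Y$ of $W(\cdot,z_i)$ and set $U_i:=B(y_i,\delta/2)$. The hypothesis $\supp(\mu)=Y$ gives $m:=\min_i \mu(U_i)>0$. For any $z\in Z$ choose $i$ with $d_Z(z,z_i)<\delta/2$; then for $y\in U_i$ the inequality $W(y,z)\ge W(y_i,z_i)-\varepsilon/3=\sup_{y'} W(y',z_i)-\varepsilon/3\ge \sup_{y'} W(y',z)-2\varepsilon/3$ holds by the uniform continuity of $W$ and of $\sup_y W(y,\cdot)$. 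Combining with $W_\beta\ge W-\varepsilon/3$ for $\beta$ large (uniformly in $y,z$), we obtain for such $\beta$
$$\int_Y e^{\beta W_\beta(y,z)}\,d\mu(y) \ge \mu(U_i)\, e^{\beta(\sup_{y'} W(y',z)-\varepsilon)} \ge m\cdot e^{\beta(\sup_{y'} W(y',z)-\varepsilon)},$$
hence $\frac{1}{\beta}\log\int_Y e^{\beta W_\beta}\,d\mu \ge \sup_{y'} W(y',z)-\varepsilon+\frac{\log m}{\beta}\ge \sup_{y'} W(y',z)-2\varepsilon$ once $\beta$ is large enough (independently of $z$).

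The main obstacle I anticipate is keeping the lower-bound estimate genuinely uniform in $z$: a single maximizer $y(z)$ generally cannot be chosen continuously in $z$, and the neighborhood on which $W_\beta(\cdot,z)$ stays close to $\sup W(\cdot,z)$ depends on $z$. The finite-cover construction above is precisely what handles this, converting a $z$-dependent choice of maximizer/neighborhood into a finite list $\{(z_i,y_i,U_i)\}$ that works simultaneously for every $z$. The extension to a sequence $\beta_i\to+\infty$ in place of the continuous parameter $\beta$ is immediate, since every step of the argument uses only the limiting behavior.
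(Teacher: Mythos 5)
Your proof is correct and follows essentially the same strategy as the paper's: a trivial pointwise bound for the upper estimate, and for the lower estimate a finite $\delta$-net of $Z$ combined with near-maximizers in $Y$, the support condition to bound $\mu$ of the corresponding balls from below, and uniform continuity plus uniform convergence to make everything uniform in $z$. The only (immaterial) difference is that you place exact maximizers at the net points and transfer to nearby $z$ via continuity of $W$, whereas the paper picks a near-maximizer $y_z$ for each $z$ and transfers the integrand from $z$ to the net point $z_0$.
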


\begin{proof} As $W$ is continuous and $Y$ is compact we have that $\tilde{W}(z):=\sup_{y\in Y}\, W(y,z)$ defines a continuous function too.
	Fix an $\epsilon>0$. As $W,\tilde{W}$ are also uniformly continuous, there exists $\delta>0$ such that
	$$d(y_1,y_2)+d(z_1,z_2)<\delta \Rightarrow |W(y_1,z_1)-W(y_2,z_2)|<\epsilon$$ \text{and} $$d(z_1,z_2)<\delta \Rightarrow |\tilde{W}(z_1)-\tilde{W}(z_2)|<\epsilon.$$ As $W_\beta \to W$ uniformly we can suppose also there exists $\beta_0>0$ such that $$d(y_1,y_2)+d(z_1,z_2)<\delta \Rightarrow |W_\beta(y_1,z_1)-W_\beta(y_2,z_2)|<2\epsilon  ,\,\text{for}\, \beta>\beta_0.$$
	
	 Let $Z_0\subseteq Z$ be a finite set such that $\cup_{z\in Z_0} B(z,\delta) = Z$. For each $z\in Z$, let $y_z \in Y$ be such that $W(y_z,z) > \tilde{W}(z)-\epsilon$. Then, for any $z\in Z$ and $y \in B(y_z,\delta)$, we have $W(y,z) > \tilde{W}(z)- 2\epsilon$. As $W_\beta \to W$ uniformly, there exists $\beta_1>\beta_0$ such that
	$W_\beta(y,z) > \tilde{W}(z)- 3\epsilon$, for any $z\in Z$, $y \in B(y_z,\delta)$ and $\beta > \beta_1$.
	
	As $Z_0$ is a finite set and $\supp(\mu)=Y,$ there exists $\beta_2>\beta_1$ such that $$\frac{1}{\beta}\log\big(\mu\big(B(y_{z_0},\delta)\big)\big) >-\epsilon,\,\forall z_0\in Z_0,\, \beta>\beta_2.$$
	
	Given any $z\in Z$, there exists $z_0\in Z_0$ satisfying $d(z,z_0)<\delta$. If $\beta > \beta_2$ we have that	
	\[\frac{1}{\beta}\log\int_Y e^{\beta W_\beta(y,z)} d\mu(y) > \frac{1}{\beta}\log\int_Y e^{\beta (W_\beta(y,z_0)-2\epsilon)} d\mu(y) \geq\frac{1}{\beta}\log\int_{B(y_{z_0},\delta)} e^{\beta (W_\beta(y,z_0)-2\epsilon)} d\mu(y)\]
	\[ > \frac{1}{\beta}\log\left(\mu\big(B(y_{z_0},\delta)\big) e^{\beta(\tilde{W}(z_0)-5\epsilon)}\right)
 >\tilde{W}(z_0)-6\epsilon>\tilde{W}(z)-7\epsilon.\]
	
	On the other hand,  there exists $\beta_4>\beta_3$ such that, $W_\beta(y,z) < \tilde{W}(z)+ \epsilon$, for any $\beta>\beta_4$ and $y\in Y$. Then, for $\beta>\beta_4$ we have
	\[  \frac{1}{\beta} \log \int_Y e^{\beta W_\beta(y,z)} d\mu(y) <\tilde{W}(z)+\epsilon.\]
	
\end{proof}

\bigskip

 Remember that $\log(\lambda_{\beta A})=P_\nu(\beta A)$.

\bigskip

\begin{definition}
Let us define the number $\displaystyle{m(A):=\sup_{\pi \in \mathcal{H}}\int A\,d\pi}$.

	A continuous function $V:X\to\mathbb{R}$ is a calibrated subaction of $A$ if
	\[\sup_{j\in J} [A(j,x) +V(\phi_j(x)) - V(x) - m(A) ]=0,\,\forall x\in X.\]
	
	A holonomic probability $\pi \in \mathcal{H}$ is maximizing with respect to $A$ if $\int A\,d\pi = m(A)$.
\end{definition}	

\begin{proposition}
Consider a Lipschitz continuous function $A:J\times X\to\mathbb{R}$. Under the above framework, we have:\newline
\begin{enumerate}
\item $\displaystyle{\lim_{\beta\to +\infty}\frac{1}{\beta}\log(\lambda_{\beta A}) = \lim_{\beta\to +\infty}\frac{P_{\nu}(\beta A)}{\beta} = m(A).}$ \newline
\item Suppose that $\frac{1}{\beta_n}\log(h_{\beta_n A}) \to V$ uniformly, as $\beta_n\to+\infty$. Then $V$ is a Lipschitz calibrated subaction of $A$. \newline
\item Suppose that $\pi_{\beta_n A} \to \pi_{\infty}$ in the weak$^*$ topology, as $\beta_n\to+\infty$. Then the probability $\pi_{\infty}$ is holonomic and maximizing  to $A$.
\end{enumerate}
\end{proposition}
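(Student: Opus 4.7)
The plan is to work with the eigenvalue equation $\mathcal{L}_{\phi,e^{\beta A},\nu}(h_{\beta A})=\lambda_{\beta A}\cdot h_{\beta A}$ in logarithmic form. Writing $V_\beta:=\tfrac{1}{\beta}\log h_{\beta A}$, it becomes
\[
V_\beta(x)+\tfrac{1}{\beta}\log\lambda_{\beta A}\;=\;\tfrac{1}{\beta}\log\int_J e^{\beta[A(j,x)+V_\beta(\phi_j(x))]}\,d\nu(j),
\]
which is precisely the shape to which Lemma~\ref{lmms} applies, with $Y=J$, $Z=X$ and $W_\beta(j,x)=A(j,x)+V_\beta(\phi_j(x))$. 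A preliminary step is to read off from the proof of Proposition~\ref{prop: eigen} (applied to $\beta A$) that $h_{\beta A}=e^{u-\max u}$ with $\lip(u)\leq \beta\,\lip(A)/(1-\gamma)$; hence $\lip(V_\beta)\leq \lip(A)/(1-\gamma)$ and $-\tfrac{\lip(A)}{1-\gamma}\operatorname{diam}(X)\leq V_\beta\leq 0$, so $\{V_\beta\}$ is equicontinuous and uniformly bounded.

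For item (1), I first check that $\tfrac{1}{\beta}\log\lambda_{\beta A}$ is bounded. From \eqref{pressure} and $H_\nu(\pi_{\beta A})\leq 0$ I get $\tfrac{\log\lambda_{\beta A}}{\beta}\leq \int A\,d\pi_{\beta A}\leq m(A)$; a lower bound follows by evaluating the eigenvalue equation at a maximizer of $h_{\beta A}$ and using the pointwise bound $h_{\beta A}\geq e^{-\beta\,\lip(A)\operatorname{diam}(X)/(1-\gamma)}$ just established. Given any $\beta_n\to\infty$, Arzelà--Ascoli extracts a further subsequence along which $V_{\beta_n}\to V$ uniformly and $\tfrac{1}{\beta_n}\log\lambda_{\beta_n A}\to L$. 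Lemma~\ref{lmms} turns the logarithmic equation into
\[
V(x)+L\;=\;\sup_{j\in J}[A(j,x)+V(\phi_j(x))].
\]
Integrating against any $\pi\in\mathcal{H}$ and using the holonomic identity $\int V\circ\phi\,d\pi=\int V\,d\pi$ yields $L\geq \int A\,d\pi$, so $L\geq m(A)$, matching the previously established upper bound. Every accumulation point of $\tfrac{1}{\beta}\log\lambda_{\beta A}$ therefore equals $m(A)$, which, combined with boundedness, proves (1).

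Item (2) is then essentially free: if $V_{\beta_n}\to V$ uniformly, the same passage to the limit, using (1) to identify $\tfrac{1}{\beta_n}\log\lambda_{\beta_n A}\to m(A)$, gives $\sup_j[A(j,x)+V(\phi_j(x))-V(x)-m(A)]=0$, which is the calibrated subaction condition; the bound $\lip(V)\leq \lip(A)/(1-\gamma)$ transfers from the approximants. For item (3), the defining identity $\int g\,d\pi=\int g\circ\phi\,d\pi$ is preserved under weak$^*$ limits for every $g\in C(X,\mathbb{R})$, so $\pi_\infty\in\mathcal{H}$. From $P_\nu(\beta_n A)=\beta_n\int A\,d\pi_{\beta_n A}+H_\nu(\pi_{\beta_n A})$ and $H_\nu\leq 0$ one obtains $\int A\,d\pi_{\beta_n A}\geq \tfrac{1}{\beta_n}\log\lambda_{\beta_n A}\to m(A)$, hence $\int A\,d\pi_\infty\geq m(A)$, while the reverse inequality holds automatically because $\pi_\infty$ is holonomic.

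The main conceptual point is the logarithmic reformulation that renders Lemma~\ref{lmms} directly applicable; the main technical nuisance is recovering a uniform Lipschitz bound on $V_\beta$, which is not stated in Proposition~\ref{prop: eigen} (the exponential bound $\lip(h_{\beta A})\leq e^{\beta\lip(A)/(1-\gamma)}$ is too weak on its own) and must be extracted from the construction of the eigenfunction inside that proof.
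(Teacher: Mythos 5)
Your proof is correct and follows essentially the same route as the paper's: the logarithmic eigenvalue equation combined with Lemma~\ref{lmms}, Arzel\`a--Ascoli on $\frac{1}{\beta}\log h_{\beta A}$, the pressure identity with $H_\nu\leq 0$ for the upper bound, and integration of the calibration inequality against holonomic probabilities for the lower bound. The only (immaterial) differences are that you bound $\frac{1}{\beta}\log\lambda_{\beta A}$ from below via the pointwise lower bound on $h_{\beta A}$ rather than by evaluating at its minimizer, and you rightly make explicit that the uniform Lipschitz bound on $\frac{1}{\beta}\log h_{\beta A}$ must be read off from the construction inside Proposition~\ref{prop: eigen} rather than from its stated exponential estimate --- a point the paper leaves implicit.
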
 	
\begin{proof}
By definition of $h_{\beta A}$ and $\lambda_{\beta A}$ we have
 \begin{equation}\label{eq: lambda}
 \lambda_{\beta A} = \int_J e^{\beta A(j,x) +\log(h_{\beta A}(\phi_j(x)))-\log(h_{\beta A}(x))}\,d\nu(j),\,\,\forall x\in X.
 \end{equation}
If $h_{\beta A}$ attains its minimum and maximum in $x_1$ and $x_2$, respectively, we get:
\[\lambda_{\beta A} = \int_J e^{\beta A(j,x_2) +\log(h_{\beta A}(\phi_j(x_2)))-\log(h_{\beta A}(x_2))}\,d\nu(j)\leq \int_J e^{\beta A(j,x_2)}\,d\nu(j) \leq e^{\beta\cdot \max_{j,x}A(j,x)}\]
	and similarly
	\[	\lambda_{\beta A} \geq \int_J e^{\beta A(j,x_1)}\,d\nu(j) \geq e^{\beta\cdot \min_{j,x}A(j,x)}.\]
Then
\[			\min_{j,x}A(j,x)\leq \frac{1}{\beta}\log(\lambda_{\beta A}) \leq \max_{j,x}A(j,x).\]
Let $k$ be an accumulation point of $\frac{1}{\beta}\log(\lambda_{\beta A})$ as $\beta\to +\infty$ and suppose that $\frac{1}{\beta_n}\log(\lambda_{\beta_n A}) \to k$. By Proposition \ref{prop: eigen} the family $\frac{1}{\beta}\log(h_{\beta A})$ is equicontinuous and uniformly bounded. By applying Arzela-Ascoli theorem, we get a subsequence of $(\beta_n)$, also denoted by $(\beta_n),$ and a Lipschitz continuous function $V$ such that  $\frac{1}{\beta_n}\log(h_{\beta_n A})\to V$ uniformly.
By taking $\lim_{\beta_n \to +\infty}\frac{1}{\beta_n}\log$ in both sides of \eqref{eq: lambda} and applying Lemma \ref{lmms} we obtain
\[k = \sup_{j}[ A(j,x) +V(\phi_j(x)) - V(x)] ,\,\,\forall x\in X.\]

We claim that $k=m(A)$. Indeed, by one hand, as $\pi_{\beta A}$ is holonomic,  we have
\[	\frac{1}{\beta}\log(\lambda_{\beta A}) =	\frac{P_\nu(\beta A)}{\beta} = 	\frac{\int \beta A\,d\pi_{\beta A} + H_{\nu}(\pi_{\beta A})}{\beta}
 \stackrel{H_\nu \leq 0}{\leq} 	\frac{\int \beta A\,d\pi_{\beta A} }{\beta}= 	\int A\,d\pi_{\beta A} \leq m(A).\]
Therefore $k\leq m(A)$. On the other hand, if $\pi$ is any holonomic probability, as $k\geq A(j,x) +V(\phi_j(x)) - V(x)$ we get $k \geq  \int A\,d\pi$. Therefore, $k\geq m(A)$.

We conclude that $k=m(A)$ for any possible convergent sequence and then $$\lim_{\beta\to+\infty}\frac{1}{\beta}\log(\lambda_{\beta A}) = m(A).$$ Clearly, we also get
\[m(A) = \sup_{j}[ A(j,x) +V(\phi_j(x)) - V(x) ],\,\,\forall x\in X,\]
(and then $V$ is a calibrated subaction) for any uniform limit of $\frac{1}{\beta}\log(h_{\beta A})$, as $\beta\to\infty$.

Finally, suppose that $\pi_{\beta_n A} \to \pi_{\infty}$ in the weak$^*$ topology, as $\beta_n\to+\infty$. Then $\pi_{\infty}$ is holonomic and $\int A\,d\pi_{\infty} \leq m(A)$. On the other hand,
\[m(A)  =\lim_{\beta_n\to+\infty}	\frac{P_\nu(\beta_n A)}{\beta}
\stackrel{H_\nu \leq 0}{\leq} \lim_{\beta_n\to+\infty}	\int  A\,d\pi_{\beta_n A} = 	\int A\,d\pi_{\infty}.\]

\end{proof}

Given $x \in X$, $n\in\mathbb{N}$ and a finite sequence $\omega = (j_1,j_2,...,j_n)\in J^{n}$ we denote
$$\phi_{\omega}(x)=\phi_{(j_1,...,j_n)}(x) := \phi_{j_1}\circ\dots\circ \phi_{j_n}(x)$$
and
$${\rm Sum}(A,\omega,x) :=  A({j_1},\phi_{(j_2,...j_n)} (x))+ A(j_2,\phi_{(j_3,...,j_n)} (x))+\dots+ A(j_n,x)-n\cdot m(A).$$

Given $x,y \in X$ and $\varepsilon>0$ we define
\begin{equation}\label{def: S_epsilon A} S_{A,\varepsilon}(x,y) = \sup_{n\in\mathbb{N}}\left[\sup_{\omega \in J^n \,|\,d(x,\phi_{\omega}(y))<\varepsilon}{\rm Sum}(A,\omega,y)\right],
\end{equation}
which can be $-\infty$ if the set $\{\omega \in J^n\,|\,d(x,\phi_{\omega}(y))<\varepsilon\}$ is empty for any $n$. Clearly $0<\epsilon < \epsilon' \Rightarrow S_{q,\varepsilon}\leq S_{q,\varepsilon '}$, so we can define the \textit{Ma\~{n}\'{e} potential} $S_A:X\times X \to \mathbb{R}\cup\{-\infty\},$ by
\begin{equation}\label{eq: Mane}
	S_A(x,y) =\lim_{\varepsilon\to 0} S_{A,\varepsilon}(x,y).
\end{equation}
The  \textit{Aubry set} is defined as
\begin{equation}\label{eq: Aubry}
	\Omega_A = \{x\in X | S_A(x,x) = 0\}.
\end{equation}

 Let $V$ be a calibrated subaction to $A$ and define $q(j,x):= A(j,x)+V(\phi_j(x))-V(x)-m(A)$. Then, by definition of calibrated subaction, we have $\sup_{j\in J} q(j,x) = 0,\,\forall\, x\in X$. For any holonomic probability $\pi$ on $J\times X$ we have
$\int q\,d\pi = \int A\,d\pi -m(A)$. Then $m(q) = 0$ and a holonomic probability $\pi$ is maximizing to $A$ if and only if it is maximizing to $q$.

\begin{theorem}\label{Mane ergodic}
	Let $A:J\times X\to\mathbb{R}$ be a Lipschitz function and $V$ be any continuous calibrated subaction to $A$. Let us define the function $q:J\times X \to \mathbb{R}$ by $q(j,x):= A(j,x)+V(\phi_j(x))-V(x)-m(A)$. Then $S_q(x,y) = S_A(x,y)+V(x)-V(y)$ and $\Omega_q=\Omega_A,$ which is a non empty set. Furthermore,
		\begin{equation}\label{eq : V Mane}
			V(x) = \sup_{z\in \Omega_A} [S_A(z,x) +V(z)],\,\,\forall x\in X.
		\end{equation}
\end{theorem}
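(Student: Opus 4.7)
The plan is to first establish the change-of-variables identity $S_q(x,y)=S_A(x,y)+V(x)-V(y)$, from which $\Omega_q=\Omega_A$ is immediate on setting $x=y$, and then to exploit that $q\leq 0$ and $\sup_{j}q(j,\cdot)\equiv 0$ (since $V$ is a calibrated subaction) as the main tool both to produce points in $\Omega_A$ and to prove the representation formula for $V$.

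For the first identity I would compute directly. Writing $\omega=(j_1,\dots,j_n)$, the definition of $q$ together with $m(q)=0$ makes the sum telescope:
\[
{\rm Sum}(q,\omega,y)\,=\,{\rm Sum}(A,\omega,y)+V(\phi_\omega(y))-V(y).
\]
Restricting to those $\omega$ with $d(x,\phi_\omega(y))<\varepsilon$ and using $|V(\phi_\omega(y))-V(x)|\leq \lip(V)\,\varepsilon$, taking the sup over $\omega,n$ and letting $\varepsilon\to 0$ yields the claimed identity; setting $y=x$ gives $\Omega_q=\Omega_A$. A key by-product, combined with $q\leq 0$, is the one-sided subaction bound $S_A(x,y)\leq V(y)-V(x)$ for all $x,y\in X$, which in particular shows $S_A(x,x)\leq 0$.

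For the non-emptiness of $\Omega_A$ I would use that, by compactness of $J$ and continuity of $q$, for each $x$ there exists $i(x)\in J$ with $q(i(x),x)=0$. Starting from an arbitrary $x_0$, build a \emph{maximizing orbit} $x_k=\phi_{i(x_{k-1})}(x_{k-1})$; the telescoping above gives ${\rm Sum}(A,\omega_n,x_0)=V(x_0)-V(x_n)$ with $\omega_n=(i(x_{n-1}),\dots,i(x_0))$ and $\phi_{\omega_n}(x_0)=x_n$. By compactness of $X$, pick an accumulation point $z$ together with two indices $n_k<n_\ell$ satisfying $d(x_{n_k},z),\,d(x_{n_\ell},z)<\varepsilon/2$. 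Applying the sub-path $\omega'=(i(x_{n_\ell-1}),\dots,i(x_{n_k}))$ to $z$, the uniform contractivity \eqref{eq: gamma contraction} yields $d(\phi_{\omega'}(z),z)\leq \gamma^{\,n_\ell-n_k}\varepsilon/2+\varepsilon/2<\varepsilon$, while Lipschitzness of $A$ and a geometric-series estimate (as in the proof of Proposition~\ref{prop: eigen}) give $|{\rm Sum}(A,\omega',z)-{\rm Sum}(A,\omega',x_{n_k})|\leq \tfrac{\lip(A)}{1-\gamma}\cdot\tfrac{\varepsilon}{2}$. Since ${\rm Sum}(A,\omega',x_{n_k})=V(x_{n_k})-V(x_{n_\ell})\to 0$ along the subsequence, this yields $S_A(z,z)\geq 0$, and combined with the previous bound gives $z\in\Omega_A$.

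Finally, for the representation formula, the inequality $\sup_{z\in\Omega_A}[S_A(z,x)+V(z)]\leq V(x)$ is immediate from the subaction bound. For the reverse, run the maximizing orbit starting at $x_0=x$: then ${\rm Sum}(A,\omega_n,x)=V(x)-V(x_n)$, and any accumulation point $z=\lim_k x_{n_k}$ lies in $\Omega_A$ by the previous step. Since $d(\phi_{\omega_{n_k}}(x),z)=d(x_{n_k},z)<\varepsilon$ for $k$ large, one reads off $S_{A,\varepsilon}(z,x)\geq V(x)-V(x_{n_k})$; letting $k\to\infty$ and then $\varepsilon\to 0$ gives $S_A(z,x)\geq V(x)-V(z)$, so the supremum is attained at $z$. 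The main technical obstacle is the non-emptiness step: one must carefully combine the pointwise maximizer $i(x)$, compactness of $X$ and the uniform contraction factor $\gamma$ to promote a near-return of the orbit $(x_n)$ into a near-return at the accumulation point $z$ itself, thereby controlling ${\rm Sum}(A,\omega',z)$ rather than merely ${\rm Sum}(A,\omega',x_{n_k})$.
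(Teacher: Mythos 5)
Your proposal is correct and follows essentially the same route as the paper: the telescoping identity ${\rm Sum}(q,\omega,y)={\rm Sum}(A,\omega,y)+V(\phi_\omega(y))-V(y)$ giving $S_q(x,y)=S_A(x,y)+V(x)-V(y)$, followed by the construction of a calibrated orbit (choosing $j$ with $q(j,\cdot)=0$ at each step) whose accumulation point is shown to lie in the Aubry set via compactness, the uniform contraction \eqref{eq: gamma contraction} and the Lipschitz control on ${\rm Sum}(A,\cdot,\cdot)$, exactly as in the paper's argument. The only slip is the bound $|V(\phi_\omega(y))-V(x)|\leq \lip(V)\,\varepsilon$: the theorem assumes $V$ merely continuous, so you should appeal to uniform continuity of $V$ on the compact space $X$ (a modulus $\eta(\varepsilon)\to 0$) instead, which changes nothing in the argument.
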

\begin{proof}
We have $m(q)=0$, then, for $\omega=(j_1,...,j_n)$,
	$${\rm Sum}(q,\omega,y) :=  q(j_1,\phi_{(j_2,...j_n)} (y))+ q({j_2},\phi_{(j_3,...,j_n)} (y))+\dots+ q(j_n,y).$$	
	As $q(j,x)= A(j,x)+V(\phi_j(x))-V(x)-m(A)$ we have
	$${\rm Sum}(q,\omega,y) = {\rm Sum}(A,\omega,y) + V(\phi_{\omega}(y))-V(y).$$
	Then
	\[S_q(x,y) =\lim_{\varepsilon\to 0}\sup_{n\in\mathbb{N}}\left[\sup_{\omega \in J^n \,|\,d(x,\phi_{\omega}(y))<\varepsilon}({\rm Sum}(A,\omega,y) + V(\phi_{\omega}(y))-V(y))\right]\]
	\[ =S_A(x,y)+V(x)-V(y).\]
	
The Aubry sets satisfy $\Omega_A=\{x\in X\,|\,S_A(x,x)=0\}$ and $\Omega_q=\{x\in X\,|\,S_q(x,x)=0\}$. As $S_A(x,x)=S_q(x,x)$ we obtain $\Omega_A = \Omega_q.$

Let us denote also $q_j(x)=q(j,x)$.	We still need to prove that $\Omega_A\neq\emptyset$ and
$\sup_{z\in \Omega_A} S_q(z,x) =0,\,\forall x\in X.$
As $V$ is a calibrated subaction we have $\sup_{j\in J} q(j,x) = 0,\forall x\in X$ and so $S_q\leq 0$.
From now on we fix $x\in X$ and prove that there exists $\tilde{x}\in \Omega_A$ such that $S_q(\tilde{x},x)=0$.

Let $(j_n)$ be a sequence of points of $J$ satisfying $q_{j_1}(x) = 0 $ and $q_{j_{n+1}}(\phi_{j_n}\circ...\circ\phi_{j_1}(x)) = 0$ for all $n\geq 1$.  Let $x_n :=\phi_{(j_n,...,j_1)}(x)= \phi_{j_n}\circ...\circ\phi_{j_1}(x)$.

As $X$ is compact there exists a subsequence $n_i$ and a point $\tilde{x}$ such that $x_{n_i}\to \tilde{x}$. It follows from the definition of $S_q(\cdot,\cdot)$ that  $S_q(\tilde{x},x)\geq \lim_{n_i\to+\infty} {\rm Sum}(q,(j_{n_i},...,j_1),x)= 0$ and then $S_q(\tilde{x},x)=0$. In what follows we will prove that $\tilde{x} \in \Omega$.

\bigskip

 Let $\omega_{k,l}:=(j_{k},j_{k-1},...,j_{l})$ for $k\geq l$. By construction of $(j_n)$ we have $q_{j_{n+1}}(x_n) = 0$ and
\begin{equation}\label{eq1_aubry}
	{\rm Sum}(q,\omega_{k,n+1},x_n)=0,\,\,\,\forall k>n.
\end{equation}

Given $\varepsilon>0$ there exist $n,m\in\mathbb{N}$ such that $n<m$ and $x_n, x_m \in B(\tilde{x},\varepsilon/2)$. Observe that $x_m = \phi_{\omega_{m,n+1}}(x_n)$ and as $\phi$ satisfies \eqref{eq: gamma contraction}   we get 	
\[d(\tilde{x},\phi_{\omega_{m,n+1}}(\tilde{x})) \leq d(\tilde{x},x_m)+ d(x_m, \phi_{\omega_{m,n+1}}(\tilde{x}))\]\[ =d(\tilde{x},x_m)+ d(\phi_{\omega_{m,n+1}}(x_n), \phi_{\omega_{m,n+1}}(\tilde{x}))<\varepsilon.\]

It follows that
\[S_{q,\varepsilon}(\tilde{x},\tilde{x})\geq {\rm Sum}(q,\omega_{m,n+1},\tilde{x}).\]

As $A$ is Lipschitz continuous and $\phi$ satisfies \eqref{eq: gamma contraction}, there exists $C>0$ such that
\[{\rm Sum}(q,\omega_{m,n+1},\tilde{x}) =  {\rm Sum}(A,\omega_{m,n+1},\tilde{x}) + V(\phi_{\omega_{m,n+1}}(\tilde{x}))-V(\tilde{x})\]
\[\geq {\rm Sum}(A,\omega_{m,n+1},x_n)- \frac{C}{1-\gamma}\frac{\varepsilon}{2}  + V(\phi_{\omega_{m,n+1}}(\tilde{x}))-V(\tilde{x})  \]
\[={\rm Sum}(q,\omega_{m,n+1},x_n)- \frac{C}{1-\gamma}\frac{\varepsilon}{2}+[V(\phi_{\omega_{m,n+1}}(\tilde{x}))-V(\phi_{\omega_{m,n+1}}(x_n))]-[V(\tilde{x}) -V(x_n)].\]
Therefore, applying also equation \eqref{eq1_aubry} and using that $V$ is continuous, we have
 $S_q(\tilde{x},\tilde{x}) =\displaystyle\lim_{\varepsilon\to 0} S_{q,\varepsilon}(\tilde{x},\tilde{x}) \geq 0.$ As we always have  $S_q\leq 0$,  we complete the proof.

\end{proof}

\section{On invariant idempotent probabilities}\label{sec: idempotent}
In this section we recall definitions and results containing in \cite{MO2}. The theory of idempotent probabilities was introduced by Maslov (also called Maslov measures)  to model problems of optimization as integrals with respect to a max-plus structure, see \cite{KM89}, \cite{Kol97}, \cite{MoDo98} and \cite{LitMas96} for classical results on Maslov measures, also  \cite{MZ}, \cite{ZAI}, \cite{Zar}, \cite{dCOS20fuzzy} and \cite{ExistInvIdempotent} for the existence of a unique invariant idempotent probability for non-place dependent IFSs.

Let us consider $\mathbb{R}_{\max}:=\mathbb{R}\cup \{-\infty\}$ endowed with the operations
\begin{enumerate}
	\item $\oplus: \mathbb{R}_{\max} \times \mathbb{R}_{\max} \to \mathbb{R}_{\max}$, where $a \oplus b :=\max(a,b)$ assuming $a \oplus -\infty:=a$.
	\item $\odot: \mathbb{R}_{\max} \times \mathbb{R}_{\max} \to \mathbb{R}_{\max}$, where $a \odot b :=a+b$ assuming $a \odot -\infty:=-\infty$.
\end{enumerate}
We consider the following conventions and notations: for any function $f:A\to\mathbb{R}_{\max}$,
\begin{enumerate}
	\item $\bigoplus_{a \in A } := \sup_{a \in A}$;
	\item $\bigoplus_{a \in A}f(a) = -\infty$, if $A=\emptyset$;
	\item $\bigoplus_{a \in A}f(a) = -\infty$, if $f(a)=-\infty,\,\forall a\in A$ .
	\item $\bigoplus_{a \in A}f(a) = +\infty$, if $A\neq \emptyset$ and $\{f(a)\,|\,a\in A\}$ is not bounded above.
\end{enumerate}

\begin{definition}\label{def: Idempotent probability as a functional}
	A functional $m: C(X,\mathbb{R}) \to \mathbb{R}$ is an idempotent probability on $X$ if
	\begin{enumerate}
		\item  $m(a \odot f)=a \odot m( f)$, $\forall a \in \mathbb{R}$ and $\forall f \in C(X,\mathbb{R})$;
		\item  $m(f \oplus g)=m( f) \oplus m( g)$, $\forall f, g \in C(X,\mathbb{R})$;
		\item  $m(0)=0$.
	\end{enumerate} We denote by $I(X)$ the set of idempotent probabilities on $X$.
\end{definition}

The idempotent probabilities are closely related with upper semi-continuous  functions (u.s.c.). We denote by $U(X, \mathbb{R}_{\max})$  the set of u.s.c. functions $\lambda:X\to \mathbb{R}_{\max}$ satisfying $\{x \in X | \lambda(x) \neq -\infty\} \neq \varnothing.$
The next result is well known in the literature (mainly from \cite{KM89}) and a proof for the present setting can be founded in \cite{MO2}.

\begin{theorem}\label{teo : densidade Maslov} A functional $\mu:C(X, \mathbb{R}) \to  \mathbb{R} $ is an idempotent probability if and only if there exists $\lambda\in U(X, \mathbb{R}_{\max})$ satisfying
	\[\mu(\psi) = \bigoplus_{x\in X} \lambda(x)\odot \psi(x),\,\,\,\,\forall \,\psi \in C(X,\mathbb{R})\,\,\,\text{and}\,\,\,\bigoplus_{x\in X}\lambda(x) = 0. \]
	If $\mu\in I(X)$ there is a unique such function $\lambda$ in $U(X, \mathbb{R}_{\max})$, which is called its density.
\end{theorem}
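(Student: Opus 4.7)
The plan is to prove both directions of the equivalence and obtain uniqueness simultaneously through a single constructive formula for the density. The easy direction (given $\lambda$, the functional $\mu(\psi)=\bigoplus_{x\in X}\lambda(x)\odot\psi(x)$ is an idempotent probability) reduces to a direct check of the three axioms: axiom (1) follows from $\sup_x(\lambda(x)+a+\psi(x))=a+\sup_x(\lambda(x)+\psi(x))$; axiom (2) from the identity $\sup_y\max(\lambda(y)+f(y),\lambda(y)+g(y))=\max(\sup_y(\lambda+f),\sup_y(\lambda+g))$; and axiom (3) from the normalization $\bigoplus_x\lambda(x)=0$.

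For the nontrivial direction, given $\mu$ I will define
\[
\lambda(x):=\inf\{\mu(\psi)-\psi(x)\,:\,\psi\in C(X,\mathbb{R})\}\in\mathbb{R}_{\max}.
\]
Upper semicontinuity is automatic, since $\lambda$ is an infimum of functions continuous in $x$. The inequality $\sup_x\lambda(x)\leq 0$ follows from axiom (3) by testing with $\psi\equiv 0$. For the reverse, I will first deduce from the axioms that $\mu$ is monotone (since $f\leq g$ implies $f\oplus g=g$, whence $\mu(g)=\mu(f)\oplus\mu(g)$) and translation-equivariant. Then, by contradiction: if $\sup_x\lambda(x)<-\varepsilon$, pick for each $x$ a continuous $\psi_x$ with $\psi_x(x)=0$ and $\mu(\psi_x)<-\varepsilon/2$, cover $X$ by finitely many neighborhoods $U_{x_1},\dots,U_{x_n}$ on which $\psi_{x_i}>-\varepsilon/4$ (using compactness of $X$), and consider $\Psi:=\max_i\psi_{x_i}$: monotonicity gives $\mu(\Psi)\geq-\varepsilon/4$, while iterated use of axiom (2) forces $\mu(\Psi)=\max_i\mu(\psi_{x_i})<-\varepsilon/2$, a contradiction. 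In particular $\{\lambda>-\infty\}$ is nonempty (in fact, $\lambda$ attains $0$ by upper semicontinuity on the compact space $X$).

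The representation formula has one trivial inequality: for any $x$ and $\psi$, the function $\psi-\psi(x)$ is admissible in the infimum defining $\lambda(x)$, so $\lambda(x)+\psi(x)\leq\mu(\psi)$, whence $\bigoplus_x(\lambda(x)\odot\psi(x))\leq\mu(\psi)$. The reverse inequality $\mu(\psi)\leq\bigoplus_x(\lambda(x)\odot\psi(x))$ is what I expect to be the main obstacle, and I will handle it by a compactness argument analogous to the one above. Assuming $\mu(\psi)>s+\varepsilon$ with $s:=\bigoplus_x(\lambda(x)\odot\psi(x))$, I choose for each $x$ some $\eta_x\in C(X,\mathbb{R})$ with $\eta_x(x)=\psi(x)$ and $\mu(\eta_x)<s+\varepsilon/3$ (available after translating the near-minimizers of the infimum defining $\lambda(x)$). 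Continuity and the equality $\eta_x(x)=\psi(x)$ produce a neighborhood $U_x$ on which $\eta_x>\psi-\varepsilon/3$; a finite subcover yields $\Psi:=\max_i\eta_{x_i}\geq\psi-\varepsilon/3$ on $X$, so monotonicity gives $\mu(\Psi)\geq\mu(\psi)-\varepsilon/3>s+2\varepsilon/3$, whereas axiom (2) gives $\mu(\Psi)<s+\varepsilon/3$, a contradiction.

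Uniqueness then comes for free from the constructive formula: any $\tilde\lambda\in U(X,\mathbb{R}_{\max})$ with the required normalization representing the same $\mu$ must satisfy $\tilde\lambda(x)=\inf_\psi(\mu(\psi)-\psi(x))$ at every $x$. The inequality $\tilde\lambda(x)\geq\inf_\psi(\mu(\psi)-\psi(x))$ is obtained by testing with a sequence $\psi_n\in C(X,\mathbb{R})$ satisfying $\psi_n(x)=0$ and $\psi_n\to-\infty$ outside neighborhoods of $x$ shrinking to $\{x\}$: then $\sup_y(\tilde\lambda(y)+\psi_n(y))$ concentrates near $x$ and, by upper semicontinuity of $\tilde\lambda$, converges to $\tilde\lambda(x)$, whence $\inf_\psi(\mu(\psi)-\psi(x))\leq\tilde\lambda(x)$. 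The whole argument hinges crucially on performing the compactness-based ``globalization'' of pointwise data; combined with the idempotent distributivity of axiom (2) and the translation-equivariance of axiom (1), it is what converts the infimum formula into a genuine idempotent integral representation.
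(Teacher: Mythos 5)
Your argument is correct, but note that the paper itself gives no proof of this statement: it is quoted from \cite{MO2} (and ultimately \cite{KM89}), so there is no in-paper argument to compare against. What you wrote is essentially the standard proof from that literature: the density is recovered as $\lambda(x)=\inf_{\psi\in C(X,\mathbb{R})}\bigl(\mu(\psi)-\psi(x)\bigr)$, upper semicontinuity is automatic as an infimum of continuous functions, monotonicity of $\mu$ follows from the $\oplus$-axiom, and both the normalization $\bigoplus_x\lambda(x)=0$ and the nontrivial inequality $\mu(\psi)\le\bigoplus_{x}\lambda(x)\odot\psi(x)$ are obtained by the same compactness-plus-finite-$\oplus$-additivity globalization; uniqueness follows by concentrating test functions at a point and invoking upper semicontinuity of the competing density. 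Two cosmetic points are left implicit: in the easy direction one should observe that $\bigoplus_x\lambda(x)\odot\psi(x)$ is a real number (it is $\le\sup\psi$ because $\lambda\le 0$, and $>-\infty$ because the u.s.c.\ function $\lambda$ attains its supremum $0$ on the compact $X$, so Definition \ref{def: Idempotent probability as a functional} applies); and in the uniqueness step the reverse inequality $\tilde\lambda(x)\le\inf_\psi\bigl(\mu(\psi)-\psi(x)\bigr)$, which you use tacitly, is immediate from the representation formula. Neither affects correctness.
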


\begin{definition} \label{def:mpIFS with measures}
	Let $(X,d_X)$ and $(J,d_J)$ be compact metric spaces.\,	A max-plus IFS (mpIFS for short) $\mathcal{S}=(X, (\phi_j)_{j\in J}, (q_j)_{j\in J})$ is a uniformly contractive IFS $(X, (\phi_j)_{j \in J})$ endowed with a normalized family of weights $(q_j)_{j\in J}$, which is a family of functions $\{q_j:X\to\mathbb{R}\,|\,j\in J\}$ satisfying: \newline
	i. \, there exists a constant $C>0$ such that
	\begin{equation}\label{eq: q lipschitz}
		|q_{j}(x_1)-q_{j}(x_2)|\leq C \cdot d_X(x_1,x_2),\,\,\forall j\in J,\; \forall x_1,\,x_2\in X;
	\end{equation}
	ii. \,
	\begin{equation}\label{eq: q normalized}
		\bigoplus_{j\in J} q_j(x) = 0, \,\,\forall x\in X;
	\end{equation}
	iii. \, the function $q:J\times X\to\mathbb{R}$, defined by $q(j,x)=q_j(x)$, is continuous.
\end{definition}

We also use the compact notation $\mathcal{S}=(X, \phi, q)$ to denote a mpIFS (assuming that the set of indices $J$ is known). Considering the setting of above section, if $A:J\times X\to\mathbb{R}$ is Lipschitz continuous and $V:X\to\mathbb{R}$ is a Lipschitz calibrated subaction of $A$, then defining $q(j,x) =A(j,x)+V(\phi_j(x))-V(x)-m(A)$ we get a normalized family of weights. This is the case of $\displaystyle q(j,x) =\lim_{\beta \to +\infty} \frac{1}{\beta}\log(q^{\beta}(j,x))$, where $q^{\beta}(j,x) = \frac{e^{\beta A(j,x)} \cdot h_{\beta A}(\phi_j(x))}{h_{\beta A}(x)\cdot \lambda_{\beta A}}$, when such limit exists.

\begin{definition}\label{def:mp ruelle operator}
	To each mpIFS $\mathcal{S}=(X, \phi, q)$ we assign the following operators:\newline
	1.  $\mathcal{L}_{\phi,q}^{0}: C(X, \mathbb{R})   \to C(X, \mathbb{R})  $, defined by
	\begin{equation}\label{eq:mp ruelle operator}
		\mathcal{L}_{\phi,q}^0(f)(x):=\bigoplus_{j \in J} q_{j}(x)\odot f(\phi_{j}(x)).
	\end{equation}
	2.  $\mathcal{M}_{\phi,q}^0: I(X) \to I(X),$ defined by
	\begin{equation}\label{eq:Markov operator}
		\mathcal{M}_{\phi,q}^0(\mu):= \bigoplus_{j \in J}\mu( q_{j}\odot (f\circ \phi_{j})).
	\end{equation}
	3.  $L_{\phi,q}^0: U(X, \mathbb{R}_{\max}) \to U(X, \mathbb{R}_{\max}),$ defined by
	\begin{equation}\label{eq:transfer operator}
		L_{\phi,q}^0(\lambda)(x):=\bigoplus_{(j,y)\in \phi^{-1}(x)} q_{j}(y) \odot  \lambda(y).
	\end{equation}
\end{definition}

The operators $\mathcal{L}_{\phi,q}^0$ and $\mathcal{M}_{\phi,q}^0$ play the hole of the operators $\mathcal{L}_{\phi,q^{\beta},\nu}$  and $\mathcal{M}_{\phi,q^{\beta},\nu}$, which were introduced in section \ref{sec:Thermodynamic}, when considered the zero temperature case and the scale $\frac{1}{\beta}\log$ (see also Lemma \ref{lmms}).

Next theorem establishes the relation between the three operators in Definition \ref{def:mp ruelle operator}.

\begin{theorem}\label{thm: equivalences} Given a function $\lambda \in  U(X, \mathbb{R}_{\max})$ satisfying $\oplus_x \lambda(x) = 0$ and the associated idempotent probability   $\mu=\bigoplus_{x\in X}\lambda(x)\odot\delta_x\in I(X)$ we have that $\mathcal{M}_{\phi,q}^0(\mu)= \bigoplus_{x \in X} L_{\phi,q} (\lambda)(x) \odot \delta_x$, that is, $\mathcal{M}_{\phi,q}^0(\mu)$ has density $L_{\phi,q} (\lambda)$ where $\lambda$ is the density of $\mu$. Furthermore
	$$\mathcal{M}_{\phi,q}^0(\mu)(f) = \mu(\mathcal{L}_{\phi,q}^0(f)),$$
	for any $f \in C(X, \mathbb{R})$, that is, $\mathcal{M}_{\phi,q}^0$ is the max-plus dual of $\mathcal{L}_{\phi,q}^0$.
\end{theorem}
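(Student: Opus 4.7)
The plan is to prove both assertions by direct computation, starting from the density representation $\mu(\psi)=\bigoplus_{x\in X}\lambda(x)\odot\psi(x)$ afforded by Theorem \ref{teo : densidade Maslov}, and then manipulating the resulting double suprema via the associative/commutative rule $\bigoplus_{a}\bigoplus_{b}=\bigoplus_{(a,b)}$.

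First I will address the duality identity. Unfolding the definition of $\mathcal{M}_{\phi,q}^{0}$ and substituting the density representation of $\mu$ yields
\[\mathcal{M}_{\phi,q}^{0}(\mu)(f)=\bigoplus_{j\in J}\mu\bigl(q_{j}\odot(f\circ\phi_{j})\bigr)=\bigoplus_{(j,x)\in J\times X}\lambda(x)\odot q_{j}(x)\odot f(\phi_{j}(x)).\]
On the other side, expanding $\mathcal{L}_{\phi,q}^{0}(f)$ inside $\mu$ and swapping the two $\bigoplus$'s produces the very same joint supremum, yielding $\mathcal{M}_{\phi,q}^{0}(\mu)(f)=\mu(\mathcal{L}_{\phi,q}^{0}(f))$. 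This step is a routine identity on max-plus functionals.

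Next I will verify that $L_{\phi,q}^{0}(\lambda)$ is the density of $\mathcal{M}_{\phi,q}^{0}(\mu)$. Plugging the definition of $L_{\phi,q}^{0}$ into $\bigoplus_{x\in X}L_{\phi,q}^{0}(\lambda)(x)\odot f(x)$ gives
\[\bigoplus_{x\in X}\bigoplus_{(j,y)\in\phi^{-1}(x)}\lambda(y)\odot q_{j}(y)\odot f(x).\]
Now $\bigoplus_{x}\bigoplus_{(j,y):\phi_{j}(y)=x}$ is simply a reindexing over all $(j,y)\in J\times X$ through $x=\phi_{j}(y)$ (using the convention $\bigoplus_{\emptyset}=-\infty$ when $\phi^{-1}(x)$ is empty), so the expression collapses to $\bigoplus_{(j,y)\in J\times X}\lambda(y)\odot q_{j}(y)\odot f(\phi_{j}(y))$, matching the formula for $\mathcal{M}_{\phi,q}^{0}(\mu)(f)$ obtained above. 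By the uniqueness clause of Theorem \ref{teo : densidade Maslov}, this identifies $L_{\phi,q}^{0}(\lambda)$ as the density of $\mathcal{M}_{\phi,q}^{0}(\mu)$.

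To close the argument I still need the normalization $\bigoplus_{x\in X}L_{\phi,q}^{0}(\lambda)(x)=0$. The same reindexing gives $\bigoplus_{x}L_{\phi,q}^{0}(\lambda)(x)=\bigoplus_{y\in X}\lambda(y)\odot\bigoplus_{j\in J}q_{j}(y)$, and condition \eqref{eq: q normalized} together with $\bigoplus_{y}\lambda(y)=0$ makes this equal to $0$. Upper semi-continuity of $L_{\phi,q}^{0}(\lambda)$ is already built into the codomain in Definition \ref{def:mp ruelle operator}. I do not anticipate a substantial obstacle; the entire argument is bookkeeping on $\bigoplus$'s, and the only delicate point is handling $\phi^{-1}(x)=\emptyset$ in the reindexing step, which is dictated by the stated conventions.
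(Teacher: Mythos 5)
Your proof is correct. The paper itself gives no argument for this statement (Section 3 only recalls it from \cite{MO2}), and your direct computation --- substituting the density representation, distributing $\odot$ over $\bigoplus$, and reindexing the double supremum via the fibration $x=\phi_j(y)$, plus the normalization check using \eqref{eq: q normalized} --- is exactly the standard bookkeeping one would expect; the only point you wave at rather than verify is that $L_{\phi,q}^0(\lambda)$ is genuinely u.s.c.\ (needed to invoke the uniqueness clause of Theorem \ref{teo : densidade Maslov}), which does hold since it is the supremum of the u.s.c.\ function $(j,y)\mapsto q_j(y)\odot\lambda(y)$ over the compact fibers of the continuous map $\phi$, but deserves a sentence rather than an appeal to the declared codomain.
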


\begin{definition}\label{def:Markov operator}
	An idempotent probability  $\mu \in I(X)$ with density $\lambda \in  U(X, \mathbb{R}_{\max})$ is called invariant (with respect to the mpIFS) if it satisfies any of the following equivalent conditions:\newline
	1. $\mathcal{M}_{\phi,q}^0(\mu)=\mu$;\newline
	2. $L_{\phi,q}(\lambda)=\lambda$;\newline
	3. $\mu(\mathcal{L}_{\phi,q}^0(f))=\mu(f)$,  for any $f \in C(X, \mathbb{R})  $.
\end{definition}

In \cite{MO2} the idempotent invariant probabilities were characterized using  the Ma\~{n}\'{e}  potential and the Aubry set.  Let us present these results below.

\begin{theorem} \label{teo: main result}
	
	Consider any function $\overline{\lambda}:X\to[-\infty,0]$ satisfying $\bigoplus_{x\in X}\bar{\lambda}(x) = 0$ and
	\[	 \bar{\lambda}(x) = \bigoplus_{z\in \Omega_q} [S_q(x,z) \odot \bar{\lambda}(z)].\]
	Then, $\overline{\mu}(f) :=\bigoplus_{x\in X} (\overline{\lambda}(x)\odot f(x))$ is an idempotent probability  which is invariant for the operator $\mathcal{M}_{\phi,q}^0$. Reciprocally, if $\mu \in I(X)$ is invariant for $\mathcal{M}_{\phi,q}^0$ and $\lambda \in U(X,\mathbb{R}_{\max})$ is its (unique u.s.c.) density, then $\lambda$ satisfies the equation.
	\begin{equation}\label{eq: represent density}
		\lambda(x) = \bigoplus_{z\in \Omega_q} [S_q(x,z) \odot \lambda(z)].
	\end{equation}
\end{theorem}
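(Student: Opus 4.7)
The plan is to prove the two directions separately. The central tool is the extension property of the Ma\~{n}\'{e} potential: for all $(j,y,z)$,
$$S_q(\phi_j(y),z) \geq q_j(y) + S_q(y,z),$$
obtained by prefixing the symbol $j$ to any path approximating $S_q(y,z)$ and using continuity of $\phi_j$. Iterating yields $S_q(\phi_\omega(y),z)\geq{\rm Sum}(q,\omega,y)+S_q(y,z)$ for every finite word $\omega$.

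For the forward direction, showing that $\bar\mu$ is an idempotent probability is a direct verification of the three axioms of Definition \ref{def: Idempotent probability as a functional} using $\bigoplus_x\bar\lambda(x)=0$. By Theorem \ref{thm: equivalences}, the invariance $\mathcal{M}_{\phi,q}^0(\bar\mu)=\bar\mu$ is equivalent to $\bar\mu(\mathcal{L}_{\phi,q}^0(f))=\bar\mu(f)$ for every $f\in C(X,\mathbb{R})$. Substituting the fixed-point equation into the definition of $\bar\mu$ and interchanging sups, both $\bar\mu(f)$ and $\bar\mu(\mathcal{L}_{\phi,q}^0(f))$ take the form $\bigoplus_{z\in\Omega_q}\bar\lambda(z)+G(z)$ and $\bigoplus_{z\in\Omega_q}\bar\lambda(z)+\tilde G(z)$, where $G(z)=\sup_x[S_q(x,z)+f(x)]$ and $\tilde G(z)=\sup_x[S_q(x,z)+\mathcal{L}_{\phi,q}^0(f)(x)]$. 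The identity thus reduces to showing $G(z)=\tilde G(z)$ for every $z\in\Omega_q$. The inequality $G\geq\tilde G$ uses the extension property applied to each $j$. For $G\leq\tilde G$, given $x$, split a nearly optimal path $\omega=(j_1,\omega')$ for $S_q(x,z)$, set $y=\phi_{\omega'}(z)$ so that $\phi_{j_1}(y)\to x$, and bound $S_q(y,z)+\mathcal{L}_{\phi,q}^0(f)(y)\geq S_q(y,z)+q(j_1,y)+f(\phi_{j_1}(y))$ from below by $S_q(x,z)+f(x)$ up to errors controlled by continuity of $f$ and the path choice.

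For the reverse direction, the inequality $\lambda(x)\geq\bigoplus_{z\in\Omega_q}[S_q(x,z)+\lambda(z)]$ follows by iterating $L_{\phi,q}(\lambda)=\lambda$ along an arbitrary path $\omega$ to obtain $\lambda(\phi_\omega(z))\geq{\rm Sum}(q,\omega,z)+\lambda(z)$, and then passing to the limit $\phi_\omega(z)\to x$ via upper semicontinuity of $\lambda$. The opposite inequality is the main obstacle. Since $(j,y)\mapsto q(j,y)+\lambda(y)$ is u.s.c. on the compact set $\phi^{-1}(x)$, the supremum defining $L(\lambda)(x)=\lambda(x)$ is attained, so one can iteratively construct a sequence $(j_n,y_n)$ with $y_0=x$, $\phi_{j_n}(y_n)=y_{n-1}$, and $\lambda(x)={\rm Sum}(q,\omega_n,y_n)+\lambda(y_n)$ exactly, where $\omega_n=(j_1,\ldots,j_n)$. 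Extract a subsequence $y_{n_k}\to z^*$ along which $\lambda(y_{n_k})$ converges (to some $a\leq\lambda(z^*)$ by u.s.c.); uniform contraction of $\phi$ gives $\phi_{\omega_{n_k}}(z^*)\to x$, and a Lipschitz/geometric-series estimate shows ${\rm Sum}(q,\omega_{n_k},y_{n_k})-{\rm Sum}(q,\omega_{n_k},z^*)\to 0$, so $\limsup_k{\rm Sum}(q,\omega_{n_k},y_{n_k})\leq S_q(x,z^*)$. Combining these yields $\lambda(x)\leq S_q(x,z^*)+\lambda(z^*)$.

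It remains to verify $z^*\in\Omega_q$, which is the delicate step. For any pair $n_{k_1}<n_{k_2}$ with $y_{n_{k_1}},y_{n_{k_2}}$ both near $z^*$, the sub-word $\omega'=(j_{n_{k_1}+1},\ldots,j_{n_{k_2}})$ satisfies $\phi_{\omega'}(y_{n_{k_2}})=y_{n_{k_1}}$ by construction, so $\phi_{\omega'}(z^*)$ is close to $z^*$ by contraction, while ${\rm Sum}(q,\omega',y_{n_{k_2}})=\lambda(y_{n_{k_1}})-\lambda(y_{n_{k_2}})\to 0$ since both terms share the same limit. This produces loops at $z^*$ of arbitrarily small cost, yielding $S_q(z^*,z^*)\geq 0$; combined with $S_q\leq 0$ (from $q\leq 0$), this forces $z^*\in\Omega_q$ and completes the proof.
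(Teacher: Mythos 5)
Your proposal cannot be checked against ``the paper's own proof'' because the paper contains none: Theorem \ref{teo: main result} sits in Section \ref{sec: idempotent}, which explicitly only \emph{recalls} results from \cite{MO2}, and no proof is given there. Judged on its own merits, your argument is correct, and it reconstructs what is essentially the natural weak-KAM-style proof. In the forward direction, reducing invariance via the duality $\mathcal{M}_{\phi,q}^0(\overline{\mu})(f)=\overline{\mu}(\mathcal{L}_{\phi,q}^0(f))$ of Theorem \ref{thm: equivalences} to the identity $G(z)=\tilde{G}(z)$ on $\Omega_q$ is legitimate, and both inequalities go through: $\tilde{G}\leq G$ from the prefix property $S_q(\phi_j(y),z)\geq q_j(y)+S_q(y,z)$, and $G\leq\tilde{G}$ from splitting a near-optimal word $\omega=(j_1,\omega')$ and using that $y=\phi_{\omega'}(z)$ satisfies $S_q(y,z)\geq {\rm Sum}(q,\omega',z)$ exactly. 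Two points you should make explicit there: the prefix property needs the uniform Lipschitz bound \eqref{eq: q lipschitz} on the weights (an approximating path only lands \emph{near} $y$, so $q_j$ must be controlled at that nearby point), not merely continuity of $\phi_j$; and when the near-optimal word has length one, $\omega'$ is empty and the needed identity $S_q(z,z)=0$ is precisely where the hypothesis $z\in\Omega_q$ enters. In the reverse direction, which is the substantive part, your construction works: the supremum in $L_{\phi,q}(\lambda)(x)=\lambda(x)$ is attained (u.s.c.\ function on the compact set $\phi^{-1}(x)$), the telescoped identity $\lambda(x)={\rm Sum}(q,\omega_n,y_n)+\lambda(y_n)$ holds exactly, the geometric-series estimate transfers ${\rm Sum}$ from base point $y_{n_k}$ to $z^*$, and the small-cost loops force $S_q(z^*,z^*)=0$, hence $z^*\in\Omega_q$ since $S_q\leq 0$ for normalized weights. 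Note that the monotonicity $\lambda(y_{n-1})=q_{j_n}(y_n)+\lambda(y_n)\leq\lambda(y_n)\leq 0$ is what makes the common limit of $\lambda(y_{n_{k_1}})$ and $\lambda(y_{n_{k_2}})$ finite, which your loop argument tacitly uses; it deserves a sentence. The only edge case left implicit is $\lambda(x)=-\infty$, which can genuinely occur (e.g.\ when $\phi^{-1}(x)=\emptyset$, since $\phi$ need not be surjective): there the backward orbit cannot be started, but the desired inequality is vacuous, so the proof stands once this is said.
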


\bigskip
We remark that we have a symmetric correspondence between the equation which characterizes the calibrated subactions in Theorem \ref{Mane ergodic} and the equation which characterizes densities of invariant idempotent probabilities in Theorem \ref{teo: main result}. The main differences are the use of $S_A$ instead $S_q$ and the order of variables.

\bigskip
In \cite{MO2} it is presented an example proving that a max-plus IFS can have infinitely many possible idempotent invariant probabilities. On the other hand, if $\phi_j(x)=\phi_j$ does not depend of $x,$ the idempotent invariant probability is unique. This will play an important hole in Section \ref{sec:Large Deviation} and we present this result below.

Given any $\omega=(j_1,j_2,...) \in J^{\mathbb{N}}$, there exists a unique point, denoted by $\pi(\omega) \in X$ satisfying
\[ \pi(\omega) = \lim_{n\to+\infty} \phi_{j_1}\circ...\circ\phi_{j_n}(x) \]
for any point $x$  of $X$.

\begin{theorem}\label{thm:represent idemp invar cte}  Given a mpIFS, $S=(X,\phi,q)$, if  $q_j(x)=q_j$ does not depend on $x$, then there exists a unique invariant idempotent  probability for $\mathcal{M}_{\phi,q}^0$. Let $\lambda \in U(X,\mathbb{R}_{\max})$ be its density. Then for any $z \in \Omega$ we have
	\[\lambda(x) = S_q(x,z) = \bigoplus_{(j_1,j_2,j_3,...)\in\pi^{-1}(x)}[q_{j_1}+q_{j_2}+q_{j_3}+...].\]
\end{theorem}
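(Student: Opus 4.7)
My strategy is to leverage the representation formula from Theorem~\ref{teo: main result}, which tells us that any density $\lambda \in U(X,\mathbb{R}_{\max})$ of an invariant idempotent probability must satisfy
\[\lambda(x) = \bigoplus_{z \in \Omega_q} [S_q(x,z) \odot \lambda(z)].\]
The plan is therefore to (i) compute $S_q(x,z)$ explicitly for $z \in \Omega_q$, showing it coincides with $\tilde\lambda(x) := \bigoplus_{\omega \in \pi^{-1}(x)} [q_{j_1}+q_{j_2}+\cdots]$; (ii) show $\bigoplus_{z \in \Omega_q} \lambda(z) = 0$ for any invariant density, so that the representation formula collapses to $\lambda = \tilde\lambda$; and (iii) verify $\tilde\lambda$ is itself a valid invariant density, which gives existence via the converse direction of Theorem~\ref{teo: main result}.

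The key observation enabling (i) is that $\operatorname{Sum}(q,\omega,y) = q_{j_1}+\cdots+q_{j_n}$ is independent of $y$. The lower bound $S_q(x,z) \geq \tilde\lambda(x)$ is then immediate: for $\omega = (j_1,j_2,\ldots) \in \pi^{-1}(x)$, the truncations $\phi_{(j_1,\ldots,j_n)}(z)$ converge to $\pi(\omega) = x$ by \eqref{eq: gamma contraction}, so their partial sums contribute to $S_{q,\varepsilon}(x,z)$. For the reverse bound I would first establish an auxiliary claim: \emph{for every $z \in \Omega_q$ there exists $\tau \in \pi^{-1}(z)$ with $q_{\tau_k}=0$ for all $k$.} Given such $\tau$, a near-maximizer $\omega^{(k)}$ for $S_q(x,z)$ can be concatenated to form $\tilde\omega^{(k)} = (\omega^{(k)},\tau) \in J^\mathbb{N}$ with $\pi(\tilde\omega^{(k)}) = \phi_{\omega^{(k)}}(z) \to x$ and $\sum_l q_{\tilde\omega^{(k)}_l}$ equal to the finite sum of $\omega^{(k)}$. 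A Tychonoff subsequential limit $\tilde\omega$ lies in $\pi^{-1}(x)$ by continuity of $\pi$, and upper semicontinuity of $\omega \mapsto \sum_k q_{\omega_k}$ (a decreasing limit of continuous partial sums since $q_j \leq 0$) yields $\tilde\lambda(x) \geq S_q(x,z)$.

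To prove the auxiliary claim, from $S_q(z,z) = 0$ I extract $\omega^{(k)} \in J^{n_k}$ with $d(z,\phi_{\omega^{(k)}}(z)) \to 0$ and $|q_{\omega^{(k)}_i}| \to 0$ uniformly in $i \leq n_k$. If $n_k$ is bounded along a subsequence, compactness of $J^N$ produces $\omega$ with $\phi_\omega(z)=z$ and $q_{\omega_i}=0$, so $\tau = (\omega,\omega,\ldots)$ works. If $n_k \to \infty$, I append a fixed $j^*$ realizing $q_{j^*}=0$ (attained because $J$ is compact and $q$ is continuous) to form $(\omega^{(k)},j^*,j^*,\ldots)$, pass to a Tychonoff limit $\tilde\omega$, and verify $\pi(\tilde\omega)=z$ via a triangle inequality combining $d(\phi_{\omega^{(k)}}(z),z) \to 0$ with $\gamma^{n_k}\operatorname{diam}(X) \to 0$; the coordinates $q_{\tilde\omega_l}$ are then all $0$ by coordinate-wise continuity of $q$.

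Once (i) is established, existence follows by checking that $\tilde\lambda$ is upper semicontinuous (compactness of $\pi^{-1}(x)$ and u.s.c.\ of the infinite sum on $J^\mathbb{N}$), satisfies $\bigoplus_x \tilde\lambda = 0$ (because $\tilde\lambda(z)=0$ for $z \in \Omega_q$ by the auxiliary claim), and satisfies the fixed-point equation of Theorem~\ref{teo: main result} (which collapses to an identity). For uniqueness, any invariant density $\lambda$ attains its supremum $0$ at some $z_0 \in X$; invariance $L_{\phi,q}^0(\lambda) = \lambda$ combined with u.s.c.\ of $(j,y)\mapsto q_j + \lambda(y)$ on the compact fiber $\phi^{-1}(z_0)$ lets me inductively build $(j_n,y_n)$ with $q_{j_n}=0$, $\lambda(y_n)=0$, and $\phi_{j_1}\circ\cdots\circ\phi_{j_n}(y_n) = z_0$. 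The contraction property then gives $\phi_{j_1}\circ\cdots\circ\phi_{j_n}(z_0) \to z_0$, so $\pi(j_1,j_2,\ldots) = z_0$ and $z_0 \in \Omega_q$; hence $\bigoplus_{z \in \Omega_q}\lambda(z)=0$, and the representation formula yields $\lambda = \tilde\lambda$. I expect the main obstacle to be the auxiliary ``flat sequence'' claim, since this is where the constant-weight hypothesis must interact simultaneously with the joint compactness of $J$, $X$, and $J^\mathbb{N}$ and with the upper semicontinuity of infinite sums.
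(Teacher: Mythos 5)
Your proof is correct, but there is no proof in the paper to compare it with: Theorem \ref{thm:represent idemp invar cte} is one of the results the paper recalls from \cite{MO2} and states without proof. Judged within the paper's framework, your derivation from Theorem \ref{teo: main result} is legitimate, and its skeleton --- invoke the representation formula, show that any invariant density must vanish somewhere on $\Omega_q$, then collapse the formula --- is exactly the skeleton of the proof the paper does give for the related Proposition \ref{prop: irreducible unique invariant}. The differences: the paper's proposition assumes irreducibility of the Aubry set and borrows the inequality $S_q(x,z)\geq S_q(x,y)+S_q(y,z)$ from \cite{MO2}, while you derive everything from the constant-weight structure itself, through the explicit identification $S_q(x,z)=\bigoplus_{(j_1,j_2,\ldots)\in\pi^{-1}(x)}[q_{j_1}+q_{j_2}+\cdots]$ for $z\in\Omega_q$ (via compactness of $J^{\mathbb{N}}$, upper semicontinuity of $\omega\mapsto\sum_k q_{\omega_k}$, and your ``flat sequence'' claim, all of which check out). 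Your route buys more than the paper's citation does: it yields the explicit density formula, makes the paper self-contained on this point, and proves in passing that constant weights force $\Omega_q$ to be irreducible --- which is precisely the consequence the paper extracts from this theorem when proving Theorem \ref{thm: one}. Two points you should state explicitly rather than leave tacit: (a) the identity $\operatorname{Sum}(q,\omega,y)=q_{j_1}+\cdots+q_{j_n}$ requires $m(q)=0$, which follows in one line because the holonomic probability $\delta_{(j^*,x^*)}$ (where $q_{j^*}=0$ and $x^*$ is the fixed point of $\phi_{j^*}$) gives $\int q\,d\pi=0$; (b) the same pair $(j^*,x^*)$ shows $\Omega_q\neq\emptyset$, which must be known before the representation formula --- a supremum over $\Omega_q$ --- says anything at all.
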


\section{Zero temperature limits and Large Deviations}\label{sec:Large Deviation}

In \cite{Aki99}  it is presented  a necessary and sufficient condition for a family of probabilities to satisfy a large deviation principle through the notion of idempotent measures introduced by Maslov. In the present section we will prove that the idempotent probabilities invariant for $\mathcal{M}_{\phi,q}$ are related with large deviation principles for the measures $\rho_{_{\beta A}}$ at zero temperature. This will provide a connection between Sections \ref{sec:Thermodynamic} and \ref{sec: idempotent}.

\begin{definition}
	Let $(\mu_\beta)_{\beta>0}$ be a family of probabilities on $X$. We will say that $(\mu_\beta)$ satisfy a large deviation principle (LDP) if there exists a lower semi-continuous (l.s.c.) rate function $I: X \rightarrow[0,+\infty]$ such that\\
	1. for each closed set $C \subset X$
	$$
	\limsup_{\beta \to + \infty } \frac{1}{\beta} \log \mu_\beta(C) \leq-\inf _{\omega \in C} I(\omega);
	$$
	2. for each open set $U \subset X$
	$$
	\liminf_{\beta \to + \infty } \frac{1}{\beta} \log \mu_\beta(U) \geq-\inf _{\omega \in U} I(\omega).
	$$
\end{definition}

 Remark: As $X$ is open and closed and $\mu_\beta(X)=1$, we obtain $\inf_{w\in X}I(w) = 0.$
	Furthermore, we assume $\log(0)=-\infty,$  $\displaystyle{\lim_{\beta\to+\infty} \frac{1}{\beta}\log(0)=-\infty}$ and $\displaystyle{-\inf_{x\in \emptyset} I(x) = -\infty}$.

Next two results are very useful in the present work.

\begin{lemma}\label{lemma: Varadhan}[Varadhan's Lemma]
	Let $(\mu_\beta)_{\beta>0}$ be a family of probabilities on the compact metric space $X$ which satisfies a LDP with rate function $I: X \rightarrow[0,+\infty]$. Then, for any function $f\in C(X,\mathbb{R})$ we have
	\[\lim_{\beta\to\infty}\frac{1}{\beta}\log (\rho_{_{\beta A}}(e^{\beta {f}})) = \sup_{x\in X}[f(x)-I(x)].\]
\end{lemma}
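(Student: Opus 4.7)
The plan is to prove the two sides of the desired equality separately by turning exponential integrals into sums over small pieces where $f$ is nearly constant, and then invoking the LDP bounds on each piece. Throughout, compactness of $X$ and continuity (hence uniform continuity) of $f$ will do all the combinatorial work.

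For the upper bound, I would fix $\varepsilon>0$, choose $\delta>0$ by uniform continuity so that $d(x,y)<\delta$ implies $|f(x)-f(y)|<\varepsilon$, and cover $X$ by finitely many closed balls $C_1,\dots,C_N$ of radius $\delta/2$. On each $C_k$, $f\leq M_k+\varepsilon$ with $M_k:=\inf_{C_k}f$, so
\[
\int_X e^{\beta f}\,d\mu_\beta \;\leq\; \sum_{k=1}^N e^{\beta(M_k+\varepsilon)}\mu_\beta(C_k).
\]
Applying $\tfrac{1}{\beta}\log$ and using that the maximum of finitely many exponentials has the same exponential rate as their sum, the LDP upper bound gives
\[
\limsup_{\beta\to\infty}\tfrac{1}{\beta}\log\!\int e^{\beta f}d\mu_\beta \;\leq\; \varepsilon+\max_{k}\bigl(M_k-\inf_{C_k}I\bigr)\;\leq\;\varepsilon+\sup_{x\in X}[f(x)-I(x)],
\]
since $M_k-\inf_{C_k}I\leq \sup_{x\in C_k}[f(x)-I(x)]$ (because $I$ attains its infimum at some point where $f\geq M_k$). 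Letting $\varepsilon\to 0$ finishes this half.

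For the lower bound, fix any $x_0\in X$ and any $\varepsilon>0$, and pick an open neighborhood $U$ of $x_0$ with $f>f(x_0)-\varepsilon$ on $U$. Then
\[
\int_X e^{\beta f}\,d\mu_\beta \;\geq\; e^{\beta(f(x_0)-\varepsilon)}\,\mu_\beta(U),
\]
so the LDP lower bound gives
\[
\liminf_{\beta\to\infty}\tfrac{1}{\beta}\log\!\int e^{\beta f}d\mu_\beta \;\geq\; f(x_0)-\varepsilon-\inf_{x\in U}I(x)\;\geq\; f(x_0)-\varepsilon-I(x_0).
\]
Taking the supremum over $x_0$ and letting $\varepsilon\to 0$ yields $\liminf\tfrac{1}{\beta}\log\int e^{\beta f}d\mu_\beta\geq \sup_{x}[f(x)-I(x)]$, completing the proof.

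The only subtle point is the step in the upper bound where one passes from $\max_k(M_k-\inf_{C_k}I)$ to $\sup_x[f(x)-I(x)]$; here lower semicontinuity of $I$ on the compact set $C_k$ guarantees the infimum is attained at some $x_k^*\in C_k$, and then $M_k-I(x_k^*)\leq f(x_k^*)-I(x_k^*)\leq\sup_x[f(x)-I(x)]$. This is a minor but essential use of the hypotheses, and is the one place where I expect the argument to require a touch of care rather than being purely mechanical.
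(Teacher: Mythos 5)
Your argument is correct, but it relates to the paper in a structural way worth flagging: the paper does not prove this lemma at all; its entire ``proof'' is a citation to Theorem 4.3.1 of Dembo--Zeitouni \cite{DZ} (the general Varadhan lemma, whose moment/tail hypotheses hold vacuously here because $X$ is compact and $f$ is bounded). You instead give the self-contained Laplace-principle argument that is standard in the compact/bounded setting: a finite cover by small closed balls combined with the LDP upper bound for closed sets, and a single small open neighborhood of a near-maximizer combined with the LDP lower bound for open sets; the two one-sided estimates then force the limit to exist and equal $\sup_x[f(x)-I(x)]$. Your route buys self-containedness at essentially no cost, while the paper's citation buys brevity and covers the non-compact case, which is never needed here. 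Two small points of care. First, with closed balls of radius $\delta/2$, two points of $C_k$ can be at distance exactly $\delta$, which your uniform-continuity choice ($d(x,y)<\delta \Rightarrow |f(x)-f(y)|<\varepsilon$) does not cover; take radius $\delta/3$, or require $|f(x)-f(y)|\le\varepsilon$ whenever $d(x,y)\le\delta$, so that $f\le M_k+\varepsilon$ on $C_k$ genuinely follows. Second, your use of lower semicontinuity to pass from $\max_k\bigl(M_k-\inf_{C_k}I\bigr)$ to $\sup_x[f(x)-I(x)]$ is sound, and the degenerate case $\inf_{C_k}I=+\infty$ is harmless since that term is then $-\infty$. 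Finally, note that the $\rho_{_{\beta A}}$ appearing in the statement is a typo for $\mu_\beta$, which you silently and correctly repaired.
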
 	
\begin{proof} See  Theorem 4.3.1 in \cite{DZ}.\end{proof}

\begin{lemma}\label{lemma: Bric}[Bric's Inverse Varadhan's Lemma]
	Let $(\mu_\beta)_{\beta>0}$ be a family of probabilities on the compact metric space $X$. Suppose that for any function $f\in C(X,\mathbb{R})$ there exists the limit
	\[\Gamma(f):=\lim_{\beta\to\infty}\frac{1}{\beta}\log (\rho_{_{\beta A}}(e^{\beta {f}})).\]
	Then $(\mu_\beta)$ satisfies a LDP. Furthermore, denoting by $I: X \rightarrow[0,+\infty]$ its deviation function, we have
	\[\Gamma(f) = \sup_{x\in X}[f(x)-I(x)],\,\forall \,f\in C(X,\mathbb{R}).\]
\end{lemma}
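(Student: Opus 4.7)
My strategy is the standard Bryc-type argument. Define the candidate rate function by the Fenchel--Legendre-type transform
\[ I(x) := \sup_{f \in C(X,\mathbb{R})} [f(x) - \Gamma(f)]. \]
As a supremum of functions $x \mapsto f(x) - \Gamma(f)$ that are continuous in $x$, this $I$ is automatically lower semi-continuous; taking $f\equiv 0$ (and noting $\Gamma(0)=0$) yields $I\geq 0$, so $I$ is a legitimate rate function. Once the LDP is proved with this $I$, Lemma \ref{lemma: Varadhan} (Varadhan) immediately delivers the identity $\Gamma(f) = \sup_{x\in X}[f(x)-I(x)]$, so the whole statement reduces to verifying the upper and lower large deviation bounds.

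For the upper bound, fix a closed $C \subseteq X$ and any $\alpha < \inf_{x\in C} I(x)$. For each $x\in C$ choose $f_x\in C(X,\mathbb{R})$ with $f_x(x)-\Gamma(f_x) > \alpha$; by continuity of $f_x$, the inequality $f_x(y)-\Gamma(f_x) > \alpha - \varepsilon$ persists on an open neighborhood $V_x$ of $x$. Compactness of $C$ yields a finite subcover $V_{x_1},\dots,V_{x_n}$. On $V_{x_i}$, the pointwise bound $e^{\beta f_{x_i}} \geq e^{\beta(\alpha-\varepsilon+\Gamma(f_{x_i}))}$ gives
\[ \mu_\beta(V_{x_i}) \leq e^{-\beta(\alpha-\varepsilon+\Gamma(f_{x_i}))}\int e^{\beta f_{x_i}}\,d\mu_\beta, \]
and taking $\limsup\frac{1}{\beta}\log$ yields $\limsup\frac{1}{\beta}\log\mu_\beta(V_{x_i})\leq -\alpha+\varepsilon$. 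Summing over $i$ costs only $\frac{1}{\beta}\log n \to 0$, so $\limsup\frac{1}{\beta}\log\mu_\beta(C)\leq -\alpha+\varepsilon$; sending $\varepsilon\downarrow 0$ and $\alpha\uparrow \inf_C I$ closes the upper bound.

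For the lower bound, fix an open $U$ and $x\in U$; the inequality is trivial when $I(x)=\infty$, so assume $I(x)<\infty$. Pick $\delta>0$ with $B(x,\delta)\subseteq U$ and take the continuous bump $\chi(y):=-\min(d(y,x)/\delta,1)$, which satisfies $\chi(x)=0$ and $\chi\equiv -1$ on $X\setminus B(x,\delta)$. For any $L>0$,
\[ \int e^{\beta L\chi}\,d\mu_\beta \leq \mu_\beta(U) + e^{-\beta L}, \]
while the definition of $I$ gives the lower bound $\Gamma(L\chi) \geq L\chi(x)-I(x) = -I(x)$. Using $\frac{1}{\beta}\log(a+b)= \max(\frac{1}{\beta}\log a,\frac{1}{\beta}\log b)+O(1/\beta)$ one obtains $-I(x) \leq \max(\liminf\frac{1}{\beta}\log\mu_\beta(U),-L)$; choosing $L>I(x)$ forces $\liminf\frac{1}{\beta}\log\mu_\beta(U) \geq -I(x)$. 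I expect this lower bound to be the delicate step: one cannot produce a direct pointwise lower bound on $\mu_\beta(U)$ from an integral inequality, and the argument hinges on the bump $\chi$ concentrating $\int e^{\beta L\chi}d\mu_\beta$ on $U$ up to an exponentially small error that gets dominated once $L$ exceeds $I(x)$.
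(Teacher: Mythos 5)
Your proof is correct, but it takes a genuinely different route from the paper: the paper's entire proof of this lemma is a citation to Theorem 4.4.2 of \cite{DZ} (Bryc's inverse Varadhan lemma), whereas you give a self-contained argument. Your construction is the standard one behind that theorem --- define $I(x)=\sup_{f\in C(X,\mathbb{R})}[f(x)-\Gamma(f)]$, get the upper bound on a closed set by covering it with finitely many neighborhoods on which some test function $f_{x_i}$ nearly realizes the supremum, and get the lower bound via the bump $\chi$ with $L>I(x)$ --- but you exploit compactness of $X$ directly, which lets you bypass the exponential tightness hypothesis that the general statement in \cite{DZ} needs (here it is automatic) and reduces the finite subcover step to ordinary compactness. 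The delicate points all check out: in the lower bound, the term you must control is $\liminf_\beta \max\bigl(\tfrac{1}{\beta}\log\mu_\beta(U),-L\bigr)$, and since the second entry is the \emph{constant} $-L$, this liminf equals $\max\bigl(\liminf_\beta\tfrac{1}{\beta}\log\mu_\beta(U),-L\bigr)$ (liminf commutes with a continuous nondecreasing function such as $t\mapsto\max(t,-L)$); this is exactly what makes your contradiction with $L>I(x)$ work, and it would fail if both entries were genuinely $\beta$-dependent. The final identity $\Gamma(f)=\sup_{x}[f(x)-I(x)]$ then follows from Lemma \ref{lemma: Varadhan} applied to the LDP you just established, as you say. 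What each approach buys: the paper's citation is shorter and appeals to a result stated for general (non-compact) Polish spaces; your argument makes the lemma elementary and self-contained in the compact setting the paper actually works in.
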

\begin{proof} See Theorem 4.4.2 in  \cite{DZ}.\end{proof}

\subsection{Irreducible Aubry set and a large deviation principle}\label{sec: irreducible}

In this subsection we investigate the uniqueness of idempotent invariant probabilities through the Aubry set and proves a LDP.

\begin{definition} We will say that $\Omega_A$ is irreducible if $S_A(x,y)= 0$ for any $x,y\in \Omega_A$.
\end{definition}

As an example, in \cite{MO2} it is proved that $\Omega_q$ is irreducible if $q_j(x) = q_j$ does not depend of $x$.

\begin{corollary} If $\Omega_A$ is irreducible then any calibrated subaction is constant on $\Omega_A$ and any two calibrated subactions are equal, up to the addition of a constant.
\end{corollary}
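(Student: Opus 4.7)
The key tool will be the representation formula \eqref{eq : V Mane} established in Theorem~\ref{Mane ergodic}: any continuous calibrated subaction $V$ satisfies
\[ V(x) = \sup_{z \in \Omega_A}\,[S_A(z,x) + V(z)], \quad \forall x \in X. \]
This formula reduces the question about arbitrary $V$ to the behavior of $V$ on $\Omega_A$, so the plan is to exploit irreducibility to pin down $V|_{\Omega_A}$ and then propagate to all of $X$.

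\textbf{Step 1 (constancy on the Aubry set).} I would first fix any calibrated subaction $V$ and evaluate the formula at a point $x \in \Omega_A$. By the irreducibility hypothesis, $S_A(z,x) = 0$ for every $z \in \Omega_A$, so the supremum collapses to $\sup_{z \in \Omega_A} V(z)$, a quantity independent of $x$. Hence $V(x) = \sup_{z \in \Omega_A} V(z)$ for every $x \in \Omega_A$, meaning $V$ is constant on $\Omega_A$.

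\textbf{Step 2 (uniqueness up to additive constant).} Given two calibrated subactions $V_1, V_2$, Step 1 gives constants $c_1, c_2$ with $V_i \equiv c_i$ on $\Omega_A$. Plugging back into the representation formula,
\[ V_i(x) = \sup_{z \in \Omega_A}\,[S_A(z,x) + c_i] = c_i + \sup_{z \in \Omega_A} S_A(z,x), \quad \forall x \in X, \]
so $V_1(x) - V_2(x) = c_1 - c_2$ identically on $X$, proving the second assertion.

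I do not expect any genuine obstacle here: the whole argument is a direct specialization of \eqref{eq : V Mane} to the irreducible regime, and the only thing worth double-checking is that the formula is applied correctly (the variables in $S_A(z,x)$ are ordered as in Theorem~\ref{Mane ergodic}, not reversed). Since Theorem~\ref{Mane ergodic} also guarantees $\Omega_A \neq \emptyset$, the suprema are never over an empty set, so no degenerate $-\infty$ issues arise.
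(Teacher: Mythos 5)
Your proof is correct and follows essentially the same route as the paper: both arguments evaluate the representation formula \eqref{eq : V Mane} at points of $\Omega_A$, use irreducibility ($S_A(z,x)=0$ on $\Omega_A\times\Omega_A$) to conclude $V$ is constant there, and then plug that constant back into the formula to see that any two calibrated subactions differ by a constant on all of $X$. Nothing further is needed.
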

\begin{proof}
	For any $x \in \Omega_A$ and any calibrated subaction $V$, applying Theorem \ref{Mane ergodic}, we have
	\[V(x) = \sup_{z\in \Omega_A} S_A(z,x) +V(z)=  \sup_{z\in \Omega_A} V(z).\]
	This proves that $V$ is constant in $\Omega_A$. Let us call $V_\Omega:=V(z),\,\forall z\in \Omega_A$. Then, for any $x\in X$ and any calibrated subaction $V$, we have
	\[ V(x) = \left(\sup_{z\in \Omega_A} S_A(z,x)\right) +V_\Omega.\]
	In this way if $V$ and $U$ are any two calibrated subactions, we have $V(x)-U(x) =V_\Omega -U_\Omega,\;\forall\,x\in X$.
\end{proof}

\begin{corollary} If  $\Omega_A$ is irreducible then there exists the uniform limit $$\displaystyle{V=\lim_{\beta \to +\infty} \frac{1}{\beta}\log(h_{\beta A})}$$ 	and consequently the uniform limit $\displaystyle{q=\lim_{\beta \to +\infty} \frac{1}{\beta}\log(q^{\beta})}$, which satisfies $q(j,x) = A(j,x)+V(\phi_j(x))-V(x)-m(A)$.
\end{corollary}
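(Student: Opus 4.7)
The plan is to combine equicontinuity/uniform boundedness of the family $\{\tfrac{1}{\beta}\log h_{\beta A}\}_{\beta>0}$ with the preceding Corollary, which identifies all calibrated subactions up to an additive constant, in order to upgrade subsequential convergence to full convergence. First I would extract from the proof of Proposition \ref{prop: eigen} the estimate $\lip(\log h_{\beta A}) \leq \tfrac{\beta\lip(A)}{1-\gamma}$, which gives $\lip(\tfrac{1}{\beta}\log h_{\beta A}) \leq \tfrac{\lip(A)}{1-\gamma}$. Combined with the normalization $\sup h_{\beta A}=1$, this yields
\[-\tfrac{\lip(A)}{1-\gamma}\operatorname{diam}(X) \leq \tfrac{1}{\beta}\log h_{\beta A}(x) \leq 0,\]
so the family $\{\tfrac{1}{\beta}\log h_{\beta A}\}$ is equicontinuous and uniformly bounded. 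By Arzela-Ascoli its closure in $C(X,\mathbb{R})$ is compact; hence every sequence $\beta_n\to +\infty$ has a subsubsequence along which $\tfrac{1}{\beta_n}\log h_{\beta_n A}$ converges uniformly to some $V$.

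By item (2) of the previous Proposition, any such uniform limit $V$ is a Lipschitz calibrated subaction of $A$, and since uniform convergence preserves suprema we automatically have $\sup_{x\in X} V(x) = 0$. Here the irreducibility hypothesis enters: by the preceding Corollary, if $\Omega_A$ is irreducible then any two calibrated subactions of $A$ differ by a constant. Applied to two possible uniform limits $V_1$, $V_2$, both normalized by $\sup V_i = 0$, this forces $V_1 = V_2$. Thus the relatively compact family has a unique accumulation point, and a standard compactness argument (in a compact metric space, any sequence with a unique subsequential limit must converge to that limit) gives uniform convergence of the whole family: $\tfrac{1}{\beta}\log h_{\beta A} \to V$ as $\beta\to+\infty$.

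For the second assertion about $q$, I would simply take logarithms in the definition of $q^\beta$ and divide by $\beta$:
\[\tfrac{1}{\beta}\log q^\beta(j,x) = A(j,x) + \tfrac{1}{\beta}\log h_{\beta A}(\phi_j(x)) - \tfrac{1}{\beta}\log h_{\beta A}(x) - \tfrac{1}{\beta}\log \lambda_{\beta A}.\]
Uniform convergence of the second and third terms to $V(\phi_j(x))$ and $V(x)$ respectively (uniform in $(j,x)$ via continuity of $\phi$), together with the convergence $\tfrac{1}{\beta}\log\lambda_{\beta A}\to m(A)$ established in the previous Proposition, yields uniform convergence of $\tfrac{1}{\beta}\log q^\beta$ to $q(j,x) = A(j,x)+V(\phi_j(x))-V(x)-m(A)$.

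The only delicate step is the uniqueness of the subsequential limit; this hinges entirely on the irreducibility of $\Omega_A$ through the preceding Corollary, and once that is in hand the rest is bookkeeping. No further obstacle is expected, since every ingredient (Arzela-Ascoli, the characterization of calibrated subactions, convergence of pressures) has already been set up in the excerpt.
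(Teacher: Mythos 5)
Your proof is correct and follows essentially the same route as the paper: equicontinuity and boundedness of $\frac{1}{\beta}\log(h_{\beta A})$ from Proposition \ref{prop: eigen}, Arzela--Ascoli, identification of every subsequential limit as the unique calibrated subaction with supremum $0$ (via the irreducibility corollary), and then the unique-accumulation-point argument to get full uniform convergence. The paper's own proof is just a terser version of exactly this, leaving the uniqueness-of-limit compactness step and the passage to $q$ implicit, both of which you correctly spell out.
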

\begin{proof} By construction of $h_{\beta A}$ we have $\sup_{x\in X} h_{\beta A}(x) =1$ and the family $\frac{1}{\beta}\log(h_{\beta A})$ is equicontinuous  (see Proposition \ref{prop: eigen}). Let $V$ be the unique calibrated subaction satisfying $\sup_{x\in X} V(x) =0$ (any two of they differ just by a constant). By applying Arzela-Ascoli theorem we get  $\frac{1}{\beta}\log(h_{\beta A}) \to V$ uniformly.
\end{proof} 	

\begin{proposition}\label{prop: irreducible unique invariant} It suppose that $\Omega_A$ is irreducible. Let $q=\lim_{\beta \to +\infty} \frac{1}{\beta}\log(q^{\beta})= A(j,x)+V(\phi_j(x))-V(x)-m(A)$. There exists a unique invariant idempotent probability for the mpIFS $(X,\phi,q)$. Its density $\lambda$ satisfies $\lambda(x) = S_q(x,z),\,\,\forall z \in \Omega_A$.
\end{proposition}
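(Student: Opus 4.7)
The plan is to deduce this uniqueness-and-formula from Theorem \ref{teo: main result} by exploiting that irreducibility of $\Omega_A$ propagates to $\Omega_q$. First I would apply Theorem \ref{Mane ergodic} with $V$ the (essentially unique, by the preceding corollary) calibrated subaction normalized by $\sup V=0$, so $q(j,x)=A(j,x)+V(\phi_j(x))-V(x)-m(A)$ is well defined, $\Omega_q=\Omega_A$, and $S_q(x,y)=S_A(x,y)+V(x)-V(y)$. Combining the irreducibility hypothesis $S_A\equiv 0$ on $\Omega_A\times\Omega_A$ with the fact that $V$ is constant on $\Omega_A$ (by the corollary), I obtain that $S_q(z_1,z_2)=0$ for every $z_1,z_2\in\Omega_q$, i.e., $\Omega_q$ is irreducible for $S_q$ as well. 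I would also record that $S_q\leq 0$ on all of $X\times X$ (since $V$ is calibrated) and $S_q(z,z)=0$ for $z\in\Omega_q$.

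For uniqueness, let $\lambda\in U(X,\mathbb{R}_{\max})$ be the density of any invariant idempotent probability. By the reciprocal part of Theorem \ref{teo: main result},
\[
\lambda(x)=\bigoplus_{z\in\Omega_q}\bigl[S_q(x,z)\odot \lambda(z)\bigr].
\]
Evaluating at any $z_0\in\Omega_q$ and using $S_q(z_0,z)=0$ for $z\in\Omega_q$ gives $\lambda(z_0)=\bigoplus_{z\in\Omega_q}\lambda(z)$, so $\lambda$ is constant on $\Omega_q$ with value $c:=\sup_{\Omega_q}\lambda$. Then for all $x$,
\[
\lambda(x)=c\odot\bigoplus_{z\in\Omega_q}S_q(x,z)\leq c,
\]
and the normalization $\bigoplus_x\lambda(x)=0$ (Theorem \ref{teo : densidade Maslov}) forces $c=0$. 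Hence $\lambda(x)=\bigoplus_{z\in\Omega_q}S_q(x,z)$, which determines $\lambda$ uniquely once we show this supremum is independent of the choice of point in $\Omega_q$.

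The key step is to upgrade irreducibility to the identity $\bigoplus_{z\in\Omega_q}S_q(x,z)=S_q(x,z_0)$ for any fixed $z_0\in\Omega_q$. For this I would use a triangle-type inequality $S_q(x,z_0)\geq S_q(x,z_1)+S_q(z_1,z_0)$ for $z_1\in\Omega_q$: one concatenates an orbit from $z_1$ approaching $x$ with an orbit from $z_0$ approaching $z_1$, and controls the Birkhoff sum error via Lip$(A)$ and the uniform contraction \eqref{eq: gamma contraction}, exactly as in the proof of Theorem \ref{Mane ergodic}. Since $S_q(z_1,z_0)=0$ by irreducibility, this gives $S_q(x,z_0)\geq S_q(x,z_1)$ for every $z_1\in\Omega_q$, hence the supremum is attained (and equal) at every point of $\Omega_q$. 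This verification of the triangle inequality is the one genuinely delicate step, and I expect to handle it either by a direct $\varepsilon$--concatenation argument or by quoting the analogous statement from \cite{MO2}.

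For existence, I would define $\bar\lambda(x):=S_q(x,z_0)$ for some fixed $z_0\in\Omega_A$. Then $\bar\lambda\leq 0$ with $\bar\lambda(z_0)=0$, so $\bigoplus_x\bar\lambda(x)=0$. The identity from the previous paragraph shows $\bar\lambda(x)=\bigoplus_{z\in\Omega_q}[S_q(x,z)\odot\bar\lambda(z)]$ (using $\bar\lambda\equiv 0$ on $\Omega_q$). The forward direction of Theorem \ref{teo: main result} then yields that $\bar\mu(f)=\bigoplus_{x\in X}(\bar\lambda(x)\odot f(x))$ is an invariant idempotent probability with density $\bar\lambda$, concluding the proof.
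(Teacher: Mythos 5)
Your proposal is correct and follows essentially the same path as the paper's own proof: both transfer irreducibility from $S_A$ to $S_q$ via Theorem \ref{Mane ergodic}, use the reciprocal part of Theorem \ref{teo: main result} to show that any invariant density must be constant (equal to $0$) on $\Omega_q$ and hence equal to $\sup_{z\in\Omega_q}S_q(\cdot,z)$, and invoke the triangle inequality $S_q(x,z)\geq S_q(x,y)+S_q(y,z)$ from \cite{MO2} to collapse that supremum to $S_q(\cdot,z_1)$ for any fixed $z_1\in\Omega_q$. The only difference is that you also verify existence explicitly, by applying the forward direction of Theorem \ref{teo: main result} to $\bar\lambda=S_q(\cdot,z_0)$, a step the paper leaves implicit.
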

\begin{proof}
	From Theorem \ref{Mane ergodic} we have $\Omega:=\Omega_A=\Omega_q$. From Theorem \ref{teo: main result}, if $\rho$ is an invariant idempotent probability with density $\lambda$  we have
	\[\lambda(x) = \sup_{z\in \Omega} S_q(x,z) +\lambda(z), \,\forall x\in X.\]
	If $x\in \Omega$ then we obtain $\lambda(x) = \sup_{z\in \Omega} \lambda(z)=:\lambda_\Omega$ is a constant.
	If $x\in X$ we have then
	\[\lambda(x) = \lambda_\Omega + \sup_{z\in \Omega} S_q(x,z).\]
	As $\sup_{x\in X}\lambda(x) = 0$ and $\sup_{x\in X}\sup_{z\in \Omega} S_q(x,z)= 0$ we get $ \lambda_\Omega=0$ and then
	\[\lambda(x) = \sup_{z\in \Omega} S_q(x,z).\]
	In \cite{MO2} it is proved that $S_q(x,z)\geq S_q(x,y)+S_q(y,z), \,\forall\,x,y,z\in X$. Then, if $z_1\in\Omega$,  we have $\lambda(x) = S_q(x,z_1)$, because
	\[\lambda(x) \geq  S_q(x,z_1) \geq \sup_{z\in \Omega} [S_q(x,z)+S_q(z,z_1) ]=  \sup_{z\in \Omega} S_q(x,z)=\lambda(x). \]
	This proves that the invariant idempotent probability for the mpIFS $(X,\phi,q)$ is unique and its density is $\lambda(x) = S_q(x,z_1),\,\forall z_1\in\Omega$.
\end{proof}

\begin{theorem}\label{teo : irreducible} Suppose that $\Omega_A$ is irreducible. Let $$q=\lim_{\beta \to +\infty} \frac{1}{\beta}\log(q^{\beta})= A(j,x)+V(\phi_j(x))-V(x)-m(A).$$ Then $(\rho_{_{\beta A}})$ satisfy a LDP with rate function $$I(x) = -S_q(x,z)=-S_A(x,z)-V(x)+V(z),\,\forall z \in \Omega_A.$$ Furthermore,
	\[\rho(f):= \sup_{x\in X} [f(x)-I(x)],\] defines an idempotent probability with density $-I$, which is the unique invariant to the mpIFS $(X,\phi,q).$ For any continuous function $f \in C(X,\mathbb{R})$ we have
	\[\lim_{\beta\to\infty}\frac{1}{\beta}\log (\rho_{_{\beta A}}(e^{\beta f})) = \rho(f).\]
\end{theorem}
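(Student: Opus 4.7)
The plan is to invoke Bric's Inverse Varadhan Lemma (Lemma~\ref{lemma: Bric}): it suffices to prove that for every $f\in C(X,\mathbb{R})$ the limit $\Gamma(f):=\lim_{\beta\to\infty}\frac{1}{\beta}\log\int e^{\beta f}\,d\rho_{_{\beta A}}$ exists and equals $\sup_{x\in X}[f(x)+\lambda(x)]$, where $\lambda(x)=S_q(x,z)$ (any $z\in\Omega_A$) is the density of the unique invariant idempotent probability supplied by Proposition~\ref{prop: irreducible unique invariant}. Combined with the identity $S_q(x,z)=S_A(x,z)+V(x)-V(z)$ from Theorem~\ref{Mane ergodic}, this identifies the stated rate function $I=-\lambda$, and the remaining assertions of the theorem (that $\rho$ is the unique invariant idempotent probability for the mpIFS with density $-I$, and the Varadhan-type formula) follow immediately from Proposition~\ref{prop: irreducible unique invariant} together with Theorem~\ref{teo : densidade Maslov}.

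Set $L_\beta(f):=\frac{1}{\beta}\log\int e^{\beta f}\,d\rho_{_{\beta A}}$. From $\min f\leq L_\beta(f)\leq\max f$ and the $1$-Lipschitz bound $|L_\beta(f)-L_\beta(g)|\leq\|f-g\|_\infty$ (which comes from $e^{-\beta\|f-g\|_\infty}e^{\beta g}\leq e^{\beta f}\leq e^{\beta\|f-g\|_\infty}e^{\beta g}$), a standard diagonal extraction over a countable dense subset of $C(X,\mathbb{R})$ produces some subsequence $\beta_k\to\infty$ along which $L_{\beta_k}(f)\to\Gamma(f)$ exists for every $f\in C(X,\mathbb{R})$. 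It then suffices to show that any such subsequential limit $\Gamma$ must equal $\sup_x[f(x)+\lambda(x)]$, since this forces the full limit to exist and to take this common value.

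The key step is to show that $\Gamma$ is an \emph{invariant idempotent probability} for the mpIFS $(X,\phi,q)$, after which uniqueness does the rest. That $\Gamma$ is an idempotent probability in the sense of Definition~\ref{def: Idempotent probability as a functional} is routine: the homogeneity $\Gamma(a+f)=a+\Gamma(f)$ and normalization $\Gamma(0)=0$ are direct, while the max-plus additivity $\Gamma(f\oplus g)=\Gamma(f)\oplus\Gamma(g)$ comes from $\max(a,b)\leq a+b\leq 2\max(a,b)$ applied to $a=\int e^{\beta f}d\rho_{_{\beta A}}$ and $b=\int e^{\beta g}d\rho_{_{\beta A}}$ before taking $\tfrac{1}{\beta}\log$. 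For the invariance $\Gamma(\mathcal{L}^0_{\phi,q}(f))=\Gamma(f)$, I rewrite \eqref{eq: rho invariant} as
\[
\int \mathcal{L}_{\phi,q^\beta,\nu}(e^{\beta f})\,d\rho_{_{\beta A}}=\int e^{\beta f}\,d\rho_{_{\beta A}},
\]
and use that irreducibility of $\Omega_A$ supplies the uniform convergence $\tfrac{1}{\beta}\log q^\beta\to q$ on $J\times X$ via the corollaries preceding Proposition~\ref{prop: irreducible unique invariant}. Lemma~\ref{lmms} applied with $Y=J$, $Z=X$, $W_\beta(j,x)=\tfrac{1}{\beta}\log q^\beta(j,x)+f(\phi_j(x))$ and $W(j,x)=q(j,x)+f(\phi_j(x))$ then yields $\tfrac{1}{\beta}\log\mathcal{L}_{\phi,q^\beta,\nu}(e^{\beta f})(x)=\mathcal{L}^0_{\phi,q}(f)(x)+o_\beta(1)$ uniformly in $x$. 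The $1$-Lipschitz property of $L_\beta$ gives $L_\beta(\mathcal{L}^0_{\phi,q}(f))=L_\beta(f)+o_\beta(1)$, and passing to the limit along $\beta_k$ proves the invariance.

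By Theorem~\ref{teo : densidade Maslov}, $\Gamma$ admits a unique u.s.c.\ density $\lambda_\Gamma$, and by Definition~\ref{def:Markov operator} the invariance of $\Gamma$ under $\mathcal{L}^0_{\phi,q}$ is equivalent to $\lambda_\Gamma$ being a fixed point of the max-plus transfer operator $L_{\phi,q}$. Proposition~\ref{prop: irreducible unique invariant} then forces $\lambda_\Gamma=\lambda$, whence $\Gamma(f)=\sup_x[f(x)+\lambda(x)]$ on every subsequence. The main obstacle is the operator-level zero-temperature passage: one needs the uniform convergence of $\mathcal{L}_{\phi,q^\beta,\nu}$ to $\mathcal{L}^0_{\phi,q}$ (after the log-rescaling), and this uniformity rests on the uniform existence of $q=\lim\tfrac{1}{\beta}\log q^\beta$ — precisely the point at which the irreducibility hypothesis enters the argument (besides being what makes the invariant idempotent probability unique in the first place).
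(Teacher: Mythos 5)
Your proposal is correct, and its overall skeleton is the one the paper uses: extract (diagonally, over a countable dense subset, using $\min f\le L_\beta(f)\le\max f$ and the $1$-Lipschitz estimate) a subsequential limit functional $\Gamma$ on all of $C(X,\mathbb{R})$, show $\Gamma$ is an idempotent probability invariant for $\mathcal{M}_{\phi,q}^0$, conclude $\Gamma=\rho$ from the uniqueness in Proposition \ref{prop: irreducible unique invariant}, and finish with Theorem \ref{teo : densidade Maslov} and Lemma \ref{lemma: Bric} (the paper phrases the subsequence principle as a proof by contradiction, which is logically the same). Where you genuinely depart from the paper is in the invariance step, which is the technical core. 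The paper applies Fubini to \eqref{eq: rho invariant} so that the $\nu$-integral becomes outermost, and must then prove that $\frac{1}{\beta_i}\log\int_X q_j^{\beta_i} e^{\beta_i f\circ\phi_j}\,d\rho_{\beta_i A}$ converges to $\Gamma(q_j+f\circ\phi_j)$ \emph{uniformly in} $j$ --- delicate precisely because that inner integral is against the varying measures $\rho_{\beta_i A}$ --- and this is what its two technical lemmas, Lemma \ref{lemma :tec1} and Lemma \ref{lemma: tec2}, are for; only afterwards is Lemma \ref{lmms} applied to the outer $\nu$-integral, yielding invariance in the dual form $\Gamma(f)=\sup_{j}\Gamma(q_j+f\circ\phi_j)$. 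You instead keep the order of integration of \eqref{eq: rho invariant}: the inner integral is then against the fixed measure $\nu$, so Lemma \ref{lmms} (with $Y=J$, $Z=X$) applies directly and gives $\frac{1}{\beta}\log\mathcal{L}_{\phi,q^{\beta},\nu}(e^{\beta f})\to\mathcal{L}_{\phi,q}^0(f)$ uniformly on $X$, and the $1$-Lipschitz property of $L_\beta$ transports this estimate through the outer $\rho_{_{\beta A}}$-integral to give $\Gamma(f)=\Gamma(\mathcal{L}_{\phi,q}^0(f))$, which is condition 3 of Definition \ref{def:Markov operator} (equivalent to the paper's form by Theorem \ref{thm: equivalences}). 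Your route is a genuine simplification: it needs neither Fubini nor Lemmas \ref{lemma :tec1} and \ref{lemma: tec2}, only Lemma \ref{lmms}, the uniform limit $q=\lim\frac{1}{\beta}\log q^{\beta}$ guaranteed by irreducibility, and the elementary Lipschitz bound; what the paper's longer route buys is the explicit dual-operator identity $\Gamma(f)=\sup_j\Gamma(q_j+f\circ\phi_j)$, but nothing in the theorem requires producing invariance in that particular form.
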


Before we present the proof of Theorem~\ref{teo : irreducible}, we need two lemmas.

\begin{lemma}\label{lemma :tec1}  Let $g:J\times X\to\mathbb{R}$ be a continuous function and for each $\beta>0$, let $g_\beta: J\times X \to\mathbb{R}$ be a continuous function. Suppose that $g_\beta \to g$ uniformly and that  $\frac{1}{\beta}\log(\int_X e^{\beta\cdot g(j,x)}\,d\rho_{\beta A}(x))$ converges  to a function $\Gamma^*(j)$ uniformly on $J$. Then  $$\frac{1}{\beta}\log(\int_X e^{\beta\cdot g_\beta(j,x)}\,d\rho_{\beta A}(x))\to\Gamma^*(j) $$ uniformly on $J$.
	The same is true if we replace $\beta$ by an increasing sequence $\beta_i$ which converges to $+\infty$.
\end{lemma}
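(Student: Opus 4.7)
The plan is to bound the perturbed integral from above and below by the unperturbed integral times a factor that is uniform in $(j,x)$, then apply the hypothesis on uniform convergence of $\frac{1}{\beta}\log\int_X e^{\beta g}d\rho_{\beta A}$.

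First, I would set $\varepsilon_\beta := \sup_{(j,x)\in J\times X} |g_\beta(j,x) - g(j,x)|$. By the hypothesis of uniform convergence $g_\beta \to g$, we have $\varepsilon_\beta \to 0$ as $\beta \to +\infty$. For every $(j,x)\in J\times X$ and every $\beta>0$ this gives the two-sided pointwise bound
\[
g(j,x) - \varepsilon_\beta \;\leq\; g_\beta(j,x) \;\leq\; g(j,x) + \varepsilon_\beta.
\]

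Second, I would multiply by $\beta>0$, exponentiate, and integrate with respect to $\rho_{\beta A}$. Since exponentiation preserves the inequality and $\rho_{\beta A}$ is a positive measure, this yields
\[
e^{-\beta\varepsilon_\beta}\!\int_X e^{\beta g(j,x)}\,d\rho_{\beta A}(x) \;\leq\; \int_X e^{\beta g_\beta(j,x)}\,d\rho_{\beta A}(x) \;\leq\; e^{\beta\varepsilon_\beta}\!\int_X e^{\beta g(j,x)}\,d\rho_{\beta A}(x),
\]
for every $j\in J$. Applying $\frac{1}{\beta}\log$ to all three terms (which is order-preserving) I obtain
\[
-\varepsilon_\beta + \tfrac{1}{\beta}\log\!\int_X e^{\beta g(j,x)}\,d\rho_{\beta A}(x) \;\leq\; \tfrac{1}{\beta}\log\!\int_X e^{\beta g_\beta(j,x)}\,d\rho_{\beta A}(x) \;\leq\; \varepsilon_\beta + \tfrac{1}{\beta}\log\!\int_X e^{\beta g(j,x)}\,d\rho_{\beta A}(x).
\]

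Finally, I would conclude by a squeeze argument. By hypothesis, the outer expressions (without the $\pm\varepsilon_\beta$) converge uniformly on $J$ to $\Gamma^*(j)$, and $\varepsilon_\beta\to 0$ is a constant in $j$, so both outer bounds converge uniformly on $J$ to $\Gamma^*(j)$. Hence the middle term does too, which is the claim. The argument makes no use of the special structure of $\beta$ being a real parameter versus a sequence, so the same display and squeeze proves the sequential version verbatim. There is no genuine obstacle here; the only thing to verify is that the uniform convergence $g_\beta \to g$ is truly uniform on the product $J\times X$ (which is what the statement assumes), so that $\varepsilon_\beta$ is a single scalar independent of $j$, which is precisely what allows the bounds to transfer through $\frac{1}{\beta}\log$ without breaking uniformity in $j$.
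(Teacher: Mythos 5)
Your proof is correct and follows essentially the same route as the paper: both arguments sandwich $g_\beta$ between $g\pm\varepsilon$, pull the uniform error out through the exponential, the integral, and $\frac{1}{\beta}\log$, and then squeeze using the assumed uniform convergence of $\frac{1}{\beta}\log\int_X e^{\beta g}\,d\rho_{\beta A}$ to $\Gamma^*$. The only cosmetic difference is that you package the error as $\varepsilon_\beta=\sup|g_\beta-g|$ while the paper fixes $\epsilon>0$ and a threshold $\beta_0$; the mathematics is identical.
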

\begin{proof}  For any $\epsilon>0$ there exists $\beta_0$ such that
	$g -\epsilon <g_\beta<g +\epsilon$, $\beta>\beta_0$
	and
	\[ \Gamma^*(j)-\epsilon <  \frac{1}{\beta}\log\left(\int_X e^{\beta\cdot g(j,x)}\,d\rho_{\beta A}(x)\right) < \Gamma^*(j)+\epsilon,\,\,\forall \beta>\beta_0.\]
	Then we have, for $\beta>\beta_0$,
	\[\Gamma^*(j)-2\epsilon < \frac{1}{\beta}\log\left(\int_X e^{\beta\cdot g(j,x)}\,d\rho_{\beta A}(x)\right)-\epsilon = \frac{1}{\beta}\log\left(\int_X e^{\beta\cdot g(j,x) -\epsilon}\,d\rho_{\beta A}(x)\right)\]
	\[\leq \frac{1}{\beta}\log\left(\int_X e^{\beta\cdot g_\beta(j,x)}\,d\rho_{\beta A}(x)\right) <\frac{1}{\beta}\log\left(\int_X e^{\beta\cdot g(j,x) +\epsilon}\,d\rho_{\beta A}(x)\right)\]
	\[=\frac{1}{\beta}\log\left(\int_X e^{\beta\cdot g(j,x)}\,d\rho_{\beta A}(x)\right)+\epsilon<\Gamma^*(j)+2\epsilon.\]
\end{proof}

\begin{lemma} \label{lemma: tec2}
	Suppose for any $f:X\to\mathbb{R}$ there exists the limit $$\Gamma(f):=\lim_{\beta\to+\infty} \frac{1}{\beta}\log\left(\int_X e^{\beta f(x)}\,d\rho_{\beta A}(x)\right).$$ Let $q:J\times X\to\mathbb{R}$ be a continuous function and consider the function $\Gamma^*:J\to\mathbb{R}$, defined by $\Gamma^*(j)= \Gamma(q_j+f\circ \phi_j)$, where $q_j(x)=q(j,x)$.  Then  $\frac{1}{\beta}\log(\int_X(e^{\beta (q + f\circ \phi)})d\rho_{\beta A})$ converges uniformly to $\Gamma^*$ on $J$. 	The same is true if we replace $\beta$ by an increasing sequence $\beta_i$ which converges to $+\infty$.
\end{lemma}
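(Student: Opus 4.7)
The plan is to exploit the joint uniform continuity of the function
$$g(j,x) := q(j,x) + f(\phi_j(x))$$
on the compact product $J\times X$, and to combine this with the pointwise convergence hypothesis via a standard finite-cover argument on the compact index space $J$. The structure of the proof is parallel to Lemma~\ref{lmms}, but with the integrals against $\rho_{\beta A}$ playing the role of the integrals against $\mu$.

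First I would observe that $g$ is continuous on $J\times X$ (since $q$ is continuous by assumption, $\phi$ is continuous by Definition~\ref{def:ucIFS}, and $f \in C(X,\mathbb{R})$), hence uniformly continuous on $J\times X$. So, given $\varepsilon>0$, there exists $\delta>0$ such that $d_J(j,j')<\delta$ implies $|g(j,x)-g(j',x)|<\varepsilon$ for every $x\in X$. As an intermediate step I would prove that $\Gamma^*$ is continuous: if $d_J(j,j')<\delta$, then the pointwise inequality $g(j,\cdot)\leq g(j',\cdot)+\varepsilon$ gives
$$\frac{1}{\beta}\log\int_X e^{\beta g(j,x)}\,d\rho_{\beta A}(x) \leq \varepsilon+\frac{1}{\beta}\log\int_X e^{\beta g(j',x)}\,d\rho_{\beta A}(x),$$
and sending $\beta\to+\infty$ yields $\Gamma^*(j)\leq\Gamma^*(j')+\varepsilon$; the symmetric bound gives $|\Gamma^*(j)-\Gamma^*(j')|\leq\varepsilon$.

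Next I would cover the compact space $J$ by finitely many balls $B(j_1,\delta),\dots,B(j_N,\delta)$. By the hypothesis applied to the finitely many continuous functions $q_{j_k}+f\circ\phi_{j_k}$, there exists $\beta_0$ such that for every $\beta>\beta_0$ and every $k\in\{1,\dots,N\}$,
$$\left|\frac{1}{\beta}\log\int_X e^{\beta g(j_k,x)}\,d\rho_{\beta A}(x)-\Gamma^*(j_k)\right|<\varepsilon.$$
Given an arbitrary $j\in J$, choose $j_k$ with $d_J(j,j_k)<\delta$; combining the pointwise sandwich $g(j_k,\cdot)-\varepsilon\leq g(j,\cdot)\leq g(j_k,\cdot)+\varepsilon$ with the previous inequality and with the uniform bound $|\Gamma^*(j)-\Gamma^*(j_k)|\leq\varepsilon$, one obtains
$$\left|\frac{1}{\beta}\log\int_X e^{\beta g(j,x)}\,d\rho_{\beta A}(x)-\Gamma^*(j)\right|<3\varepsilon,$$
independently of $j\in J$. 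The argument adapts verbatim to any increasing sequence $\beta_i\to+\infty$.

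I do not expect a substantial obstacle here: the only inputs required are the compactness of $J\times X$ (giving uniform continuity of $g$) and the fact that the pointwise convergence need only be applied at finitely many points of $J$, so the real content is bookkeeping. The closest thing to a subtle step is ensuring that $\Gamma^*$ itself is continuous, which is the reason the finite-cover argument even closes; this is handled by the same uniform continuity estimate before the convergence at the centers $j_k$ is invoked.
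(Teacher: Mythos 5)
Your proof is correct and takes essentially the same approach as the paper: uniform continuity of $(j,x)\mapsto q_j(x)+f(\phi_j(x))$ in the $J$ variable, a finite $\delta$-cover of the compact set $J$, the convergence hypothesis applied only at the finitely many centers, and monotonicity plus translation-invariance of the functionals $\frac{1}{\beta}\log\int_X e^{\beta(\cdot)}\,d\rho_{\beta A}$ to transfer the estimate to arbitrary $j$, with the same final tolerance $3\varepsilon$. The only cosmetic difference is that you isolate the continuity of $\Gamma^*$ as an explicit intermediate step, whereas the paper obtains the same inequality inline within its final chain of estimates.
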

\begin{proof}
	Fix any $\epsilon>0$. As $q$ and $f$ are continuous, $\phi$ satisfies \eqref{eq: gamma contraction} and $J$ is compact,  there is $\delta>0$ such that $d(j,l)<\delta \Rightarrow |q_j(x)-q_l(x) +f(\phi_j(x))-f(\phi_l(x))|<\epsilon,\,\forall\,x\in X.$ As $J$ is compact and the balls $B(j,\delta)$, $j\in J,$ form an open covering of $J$ we can obtain a finite set $J_0\subset J$ such that the balls $B(j,\delta)$, $j\in J_0$, form an open covering of $J$ too. As this set $J_0$ is finite, there is $n_0\in\mathbb{N}$ such that
	\[ \Gamma(q_j +f\circ\phi_j)-\epsilon <  \frac{1}{\beta}\log(\int_X e^{\beta(q_j+ f\circ\phi_j)}d\rho_{\beta A}) < \Gamma(q_j + f\circ\phi_j)+\epsilon,\,\,\forall j\in J_0, \forall \beta>n_0.\]
	For any $l\in J$ there is $j\in J_0$ such that $d(l,j)<\delta$, then, $q_j+f\circ\phi_j-\epsilon <q_l+f\circ\phi_l<q_j+f\circ\phi_j+\epsilon$. From definition of $\Gamma$ we immediately get $\Gamma(f+a)=\Gamma(f)+a$ for any $a\in\mathbb{R}$ and $[f\leq g \Rightarrow \Gamma(f)\leq \Gamma(g)]$.
	Then, for $\beta>n_0$,
	\[\Gamma(q_l+f\circ\phi_l)-3\epsilon \leq \Gamma(q_j+f\circ\phi_j+\epsilon)-3\epsilon = \Gamma(q_j+f\circ\phi_j)-2\epsilon\]
	\[ < \frac{1}{\beta}\log\left(\int_X e^{\beta( q_j+ f\circ\phi_j)}\,d\rho_{\beta A}\right)-\epsilon = \frac{1}{\beta}\log(\int_Xe^{\beta (q_j+f\circ\phi_j-\epsilon)}\,d\rho_{\beta A})\]
	\[\leq \frac{1}{\beta}\log\left(\int_Xe^{\beta(q_l+ f\circ\phi_l)}\,d\rho_{\beta A}\right) <\frac{1}{\beta}\log\left(\int_Xe^{\beta (q_j+f\circ\phi_j+\epsilon)}\,d\rho_{\beta A}\right)\]
	\[<\Gamma(q_j+f\circ\phi_j)+2\epsilon = \Gamma(q_j+f\circ\phi_j-\epsilon)+3\epsilon \leq \Gamma(q_l+f\circ\phi_l)+3\epsilon.\]
\end{proof}

\begin{proof}[\,\,of Theorem \ref{teo : irreducible}. ]  From Proposition \ref{prop: irreducible unique invariant} there exists a unique invariant idempotent probability $\rho$ for the mpIFS $(X,\phi,q)$.
	Fix  $\tilde{f} \in  C(X,\mathbb{R})$ and suppose by contradiction there exists an increasing sequence of positive numbers $(\beta_i)$ such that $$\lim_{\beta_i\to\infty}\frac{1}{\beta_i}\log (\rho_{\beta_i A}(e^{\beta_i \tilde{f}}))\neq \rho(\tilde{f}).$$
	
	As $C(X,\mathbb{R})$ is separable, there exists a enumerable set $F$  of continuous functions which is dense in $C(X,\mathbb{R})$ with the uniform norm. By applying a Cantor's diagonal argument we can construct a subsequence (also denoted by $(\beta_i)$) such that  the limit $\lim_{\beta_i\to +\infty}\frac{1}{\beta_i}\log(\rho_{\beta_i A}(e^{\beta_i f} ))$ there exists for any $f\in F$.
	We denote by $$\Gamma(f):=\lim_{\beta_i\to\infty}\frac{1}{\beta_i}\log (\rho_{\beta_i A}(e^{\beta_i f}))$$ for any $f\in F$.
	
	\textit{Claim 1.} The limit $\Gamma(g):=\lim_{\beta_i\to\infty}\frac{1}{\beta_i}\log (\rho_{\beta_i A}(e^{\beta_i g}))$ there exists for any $g \in C(X,\mathbb{R})$.
	
	\textit{Proof of Claim 1.} Given $\epsilon >0$ there exists $f\in F$ such that
	$f(x)-\epsilon \leq g(x) \leq f(x)+\epsilon$ for any $x\in X$. Therefore
	\[\Gamma(f)-\epsilon = \lim_{\beta_i\to\infty}\frac{1}{\beta_i}\log (\rho_{\beta_i A}(e^{\beta_i (f-\epsilon)})) \leq \liminf_{\beta_i\to\infty}\frac{1}{\beta_i}\log (\rho_{\beta_i A}(e^{\beta_i g}))\]
	\[\leq \limsup_{\beta_i\to\infty}\frac{1}{\beta_i}\log (\rho_{\beta_i A}(e^{\beta_i g}))\leq \lim_{\beta_i\to\infty}\frac{1}{\beta_i}\log (\rho_{\beta_i A}(e^{\beta_i (f+\epsilon)})) \leq \Gamma(f)+\epsilon.\]
	Then
	\[\limsup_{\beta_i\to\infty}\frac{1}{\beta_i}\log (\rho_{\beta_i A}(e^{\beta_i g})) - \liminf_{\beta_i\to\infty}\frac{1}{\beta_i}\log (\rho_{\beta_i A}(e^{\beta_i g})) \leq 2\epsilon.\]
	Making $\epsilon\to 0$ we get the existence of the limit $\Gamma(g)$. This finish the proof of claim.
	
	\bigskip
	
	\textit{Claim 2.} The map $\Gamma:C(X,\mathbb{R})\to \mathbb{R}$, given by  $\Gamma(f)=\lim_{{\beta_i}\to\infty}\frac{1}{{\beta_i}}\log (\rho_{\beta_i A}(e^{{\beta_i} f}))$ defines an idempotent probability.

	\textit{Proof of Claim 2.}
	\begin{enumerate}
		\item $\Gamma(0)=\lim_{{\beta_i}\to\infty}\frac{1}{{\beta_i}}\log(\rho_{\beta_i A}(1))=0$;
		\item $\Gamma(\lambda \odot \psi)=\lim_{{\beta_i}\to\infty}\frac{1}{{\beta_i}}\log(\rho_{\beta_i A}(e^{{\beta_i} \lambda}\cdot e^{{\beta_i} \psi}))=\lambda \odot \Gamma(\psi)$, for all $\lambda \in\mathbb{R}$ and $\psi \in C(X,\mathbb{R})$;
		\item $\Gamma(\varphi \oplus \psi)= \Gamma(\varphi) \oplus \Gamma(\psi)$, for all $\varphi, \psi \in C(X,\mathbb{R})$. \newline To check this last equation just consider the sequence of inequalities:
		\[\Gamma(\varphi \oplus \psi) = \lim_{{\beta_i}\to\infty}\frac{1}{{\beta_i}}\log(\rho_{\beta_i A}(e^{{\beta_i}(\varphi \oplus \psi)})\geq \lim_{{\beta_i}\to\infty}\frac{1}{{\beta_i}}\log(\rho_{\beta_i A}(e^{{\beta_i}\varphi}) = \Gamma(\varphi).\] Similarly we get such inequality for $\psi$ and so \[\Gamma(\varphi \oplus \psi)\geq \Gamma(\varphi) \oplus \Gamma(\psi).\] On the other hand
		\[\Gamma(\varphi \oplus \psi) = \lim_{{\beta_i}\to\infty}\frac{1}{{\beta_i}}\log(\rho_{\beta_i A}(e^{{\beta_i}(\varphi \oplus \psi)}))\]\[\leq
		\lim_{{\beta_i}\to\infty}\frac{1}{{\beta_i}}\log(\rho_{\beta_i A}(e^{{\beta_i} \varphi})+\rho_{\beta_i A}(e^{{\beta_i} \psi}) ) =  \Gamma(\varphi) \oplus \Gamma(\psi).\]
	\end{enumerate}

	\textit{Claim 3.} The idempotent probability $\Gamma$ is invariant for the operator $\mathcal{M}_{\phi,q}^0$.
	
	\textit{Proof of Claim 3.}  Replace $f$ by $e^{\beta f}$ in equation \eqref{eq: rho invariant} and then take $\lim_{\beta_i\to\infty} \frac{1}{\beta_i}\log(\cdot)$ in both sides of this equation. In this way we get
	\[\frac{1}{\beta_i}\log\left(\int_X e^{\beta_i f}\,d\rho_{\beta_i A}(x)\right) =\frac{1}{\beta_i}\log\left( \int_X\int_J q_j^{\beta_i}(x) \cdot (e^{\beta_i f\circ \phi_j(x)})\,d\nu(j)d\rho_{\beta_i A}(x)\right).\]
	Applying  Fubini's theorem we have
	\[\frac{1}{\beta_i}\log\left(\int_X e^{\beta_i f}\,d\rho_{\beta_i A}(x)\right) =\frac{1}{\beta_i}\log\left( \int_J\int_X q_j^{\beta_i}(x) \cdot (e^{\beta_i f\circ \phi_j(x)})\,d\rho_{\beta_i A}(x)d\nu(j)\right)\]
	\[=		\frac{1}{\beta_i}\log\left( \int_J\int_X e^{\beta_i [\frac{1}{\beta_i}\log\left(q_j^{\beta_i}(x) \cdot (e^{\beta_i f\circ \phi_j(x)})\right)]}\,d\rho_{\beta_i A}(x)d\nu(j)\right).\]
	As $\frac{1}{\beta_i}\log(q_j^{\beta_i}(x) \cdot (e^{\beta_i f\circ \phi_j(x)})) \to q(j,x) + f\circ \phi_j(x)$ uniformly, applying Lemma \ref{lemma :tec1} and Lemma \ref{lemma: tec2} we get that  $\frac{1}{\beta_i}\log\int_X e^{\beta_i [\frac{1}{\beta_i}\log(q_j^{\beta_i}(x) \cdot (e^{\beta_i f\circ \phi_j(x)}))]}\,d\rho_{\beta_i A}(x)$ converges  to $\Gamma(q_j + f\circ \phi_j)$ uniformly on $J$. Then, applying Lemma \ref{lmms}, we get
	\[	\frac{1}{\beta_i}\log\left( \int_J\int_X e^{\beta_i [\frac{1}{\beta_i}\log(q_j^{\beta_i}(x) \cdot (e^{\beta_i f\circ \phi_j(x)}))]}\,d\rho_{\beta_i A}(x)d\nu(j)\right) \to \sup_{j\in J} [\Gamma(q_j + f\circ \phi_j)].\]
	Therefore, 	
	\[\Gamma(f) = \sup_j  [\Gamma(q_j+f\circ \phi_j)]\,\,\,\forall \,f\in C(X,\mathbb{R}).\]
	This shows that $\Gamma$ is invariant for the operator $\mathcal{M}_{\phi,q}^0$
	acting on idempotent probabilities. This proves Claim 3.
	
	\bigskip
	
	From Proposition \ref{prop: irreducible unique invariant},  there exists a unique invariant idempotent  probability for $\mathcal{M}_{\phi,q}^0$ which we denote by $\rho$. Then $\Gamma = \rho$ and the initial hypothesis of this proof is false. We conclude that for any continuous function $f$,  $\rho(f) =\lim_{\beta\to\infty}\frac{1}{\beta}\log (\rho_{_{\beta A}}(e^{\beta f}))$. Let us call by $\lambda$ the density of $\rho$. Then  $\lambda \in U(X,\mathbb{R}_{\max})$ and from Proposition \ref{prop: irreducible unique invariant} it satisfies
	\[\lambda(x) =  S_q(x,z),\,\,\forall z \in \Omega_A.\]
	
	We still need to prove that the Gibbs measures $\rho_{_{\beta A}}$ satisfies a LDP with rate function $I(x) = -\lambda(x)$.
	But this is just a consequence of Theorem \ref{teo : densidade Maslov} and Lemma~\ref{lemma: Bric}.
	
\end{proof}

\subsection{Example: the non-place dependent case}

In this subsection we suppose $X$ and $J$ are compact metric spaces. We consider a uniformly contractible IFS $\mathcal{S}=(X, (\phi_j)_{j \in J})$ and also fixed numbers $A_j, j\in {J}$, such that the function $A:J\to\mathbb{R}$, given by $A(j)=A_j$ is Lipschitz continuous.

For each $\beta>0$, the operator $\mathcal{L}_{\phi,e^{\beta A},\nu}$ has the eigenfunction $h_{\beta A}=1$ associated to the eigenvalue $\lambda_{\beta A} = \int_J e^{\beta A_j}\,d\nu(j)$. Then, we consider the numbers given by $q_j^\beta = \frac{e^{\beta A_j}}{\int_J e^{\beta A_i}d\nu(i)},\,j\in J$. Observe that $\int_J q_j^{\beta}d\nu(j)=1$. We will call $(q_j^\beta)_{j\in J}$ a \textbf{Gibbs density} on $J$ for temperature $\frac{1}{\beta}$.
There exists a unique Borel probability $\rho_{_{\beta A}}$ on $X$ (Gibbs probability) which is  invariant for the dual operator $\mathcal{M}_{\phi,q^{\beta},\nu}$. It satisfies
\begin{equation}\label{eqj1} \rho_{_{\beta A}}(f) = \int_J q_j^\beta \cdot \rho_{_{\beta A}}(f\circ \phi_j)\,d\nu(j) \,\,\forall f\in C(X,\mathbb{R}).
\end{equation}

As $q_j^\beta = \frac{e^{\beta A_j}}{\int_J e^{\beta A_i}d\nu(i)}$ we obtain from Lemma \ref{lmms}
\[q_j:=\lim_{\beta{\to\infty}} \frac{1}{\beta}\log	(q_j^\beta) = A_j - \max_{i\in J}A_i.\]
Then $q_j \leq 0$ and $\bigoplus_{j\in J}q_j = 0$. As $A$ is also continuous, we conclude that  $\mathcal{S}=(X, (\phi_j)_{j\in J}, (q_j)_{j\in J})$ is a max-plus IFS.

From Theorem  \ref{thm:represent idemp invar cte} there exists a unique invariant idempotent  probability $\rho$ for $\mathcal{M}_{\phi,q}^0$. This probability satisfies
\begin{equation}\label{eqj2} \rho(f) = \bigoplus_{j\in J} q_j \odot \rho(f\circ \phi_j) \,\,\forall f\in C(X,\mathbb{R})
\end{equation}
and its density $\lambda \in U(X,\mathbb{R}_{\max})$ satisfies
\[\lambda(x) =  \bigoplus_{(j_1,j_2,j_3,...)\in\pi^{-1}(x)}[q_{j_1}+q_{j_2}+q_{j_3}+...].\]

\begin{theorem}\label{thm: one} Consider the above framework. Then the family of Gibbs probabilities $(\rho_{_{\beta A}})$ satisfy a LDP and its  rate function is given by $$I(x) = -\bigoplus_{(j_1,j_2,j_3,...)\in\pi^{-1}(x)}[q_{j_1}+q_{j_2}+q_{j_3}+...].$$
\end{theorem}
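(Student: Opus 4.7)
The plan is to observe that the non-place dependent setting is precisely a case where every hypothesis of Theorem~\ref{teo : irreducible} is automatically satisfied, and then to combine it with Theorem~\ref{thm:represent idemp invar cte} to read off the explicit density. The argument is an assembly of results already proved in the excerpt rather than a new computation.

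First I would check that the limit $q=\lim_{\beta\to\infty}\tfrac{1}{\beta}\log q^{\beta}$ exists and equals the stated value. Because $A(j,x)=A_j$ depends only on $j$, the constant function $h_{\beta A}\equiv 1$ is an eigenfunction of $\mathcal{L}_{\phi,e^{\beta A},\nu}$ with eigenvalue $\int_J e^{\beta A_j}\,d\nu(j)$. By the uniqueness statement in Proposition~\ref{prop: eigen}, this is \emph{the} eigenpair and $h_{\beta A}\equiv 1$ satisfies the normalization $\sup h_{\beta A}=1$. Lemma~\ref{lmms} applied to $\tfrac{1}{\beta}\log\lambda_{\beta A}$ then yields $m(A)=\max_j A_j$, the identically zero function is the Lipschitz calibrated subaction of this problem, and consequently $q(j,x)=A_j+0-0-m(A)=A_j-\max_i A_i=q_j$, independent of $x$.

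Second, I would invoke the remark from Section~\ref{sec: irreducible}, established in \cite{MO2}, that $\Omega_q$ is irreducible whenever $q_j$ does not depend on $x$. Since $V\equiv 0$ gives $S_q=S_A$ and therefore $\Omega_q=\Omega_A$ by Theorem~\ref{Mane ergodic}, the Aubry set $\Omega_A$ itself is irreducible.

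Third, I apply Theorem~\ref{teo : irreducible}: with $\Omega_A$ irreducible, the family $(\rho_{_{\beta A}})$ satisfies a LDP with rate function $I=-\lambda$, where $\lambda$ is the density of the unique invariant idempotent probability for the mpIFS $(X,\phi,q)$. Since $q_j$ is independent of $x$, Theorem~\ref{thm:represent idemp invar cte} provides the explicit formula
\[
\lambda(x)=\bigoplus_{(j_1,j_2,j_3,\ldots)\in\pi^{-1}(x)}[q_{j_1}+q_{j_2}+q_{j_3}+\ldots],
\]
and substituting into $I=-\lambda$ yields the claimed rate function. The main obstacle, insofar as there is one, is purely bookkeeping: verifying that the degenerate choice $h_{\beta A}\equiv 1$, $V\equiv 0$ makes Section~\ref{sec: irreducible} apply verbatim. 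No new estimate is needed, since the heavy lifting (the existence of the limit $q$, the irreducibility argument, and the LDP via Bric's converse Varadhan lemma) is already carried out in the cited results.
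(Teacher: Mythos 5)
Your proposal is correct and follows essentially the same route as the paper: verify that the non-place dependent framework gives $h_{\beta A}\equiv 1$ and $q_j=A_j-\max_i A_i$, deduce that $\Omega_A$ is irreducible (the paper gets this from Theorem~\ref{thm:represent idemp invar cte} via $S_q(x,z)=S_q(x,x)=0$ for $x,z\in\Omega_A$, you via the equivalent remark quoted from \cite{MO2}; both rest on the same fact), and then conclude by combining Theorem~\ref{teo : irreducible} with the explicit density formula of Theorem~\ref{thm:represent idemp invar cte}. The only cosmetic difference is that you re-verify inside the proof the computations ($m(A)=\max_j A_j$, $V\equiv 0$, existence of the limit $q$) that the paper carries out in the subsection preamble as part of ``the above framework.''
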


\begin{proof} From Theorem \ref{thm:represent idemp invar cte}, if $x,z \in \Omega_A$ we have $S_q(x,z)=S_q(x,x)=0$. Then $\Omega_A$ is irreducible and therefore the proof is consequence of Theorem~\ref{teo : irreducible}.
\end{proof}

\subsection{The place dependent case}

In this section we will prove the following result.

\begin{theorem}\label{thm: LDP place dependent} Suppose that the family of invariant probabilities $(\rho_{_{\beta A}})_{\beta>0}$ satisfy a LDP and denote by $I$ its deviation function. Suppose that there exists the uniform limit $V = \lim_{\beta \to+\infty} \frac{1}{\beta}\log(h_{\beta A})$. Then, the uniform limit $q = \lim_{\beta \to +\infty} \frac{1}{\beta}\log(q^{\beta})$ there exists and it satisfies $q(j,x) = A(j,x) + V(\phi_j( x)) - V(x) -m(A),$ which defines a normalized family of weights. Furthermore, $-I$ is the density of an idempotent invariant probability $\rho$ for the mpIFS $\mathcal{S}=(X, \phi, q)$, that is
\begin{equation}\label{eq : -I}
	 -I(x) =\sup_{z\in \Omega} [S_q(x,z) -I(z)]  ,\,\,\forall x\in X .
\end{equation}
\end{theorem}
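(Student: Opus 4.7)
The plan is to follow the same strategy as in the proof of Theorem \ref{teo : irreducible}, but replace the subsequence extraction (which was needed there to produce a candidate idempotent probability from zero-temperature limits) by a direct construction from Varadhan's lemma. The hypotheses here are strong enough to bypass the need for irreducibility.

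First, I would address the existence and shape of $q$. Since $q^{\beta}(j,x) = \frac{e^{\beta A(j,x)} h_{\beta A}(\phi_j(x))}{\lambda_{\beta A}\, h_{\beta A}(x)}$, one has
$$\frac{1}{\beta}\log q^{\beta}(j,x) = A(j,x) + \tfrac{1}{\beta}\log h_{\beta A}(\phi_j(x)) - \tfrac{1}{\beta}\log h_{\beta A}(x) - \tfrac{1}{\beta}\log \lambda_{\beta A}.$$
By the hypothesis $\tfrac{1}{\beta}\log h_{\beta A} \to V$ uniformly, and by part (1) of the zero-temperature Proposition, $\tfrac{1}{\beta}\log \lambda_{\beta A} \to m(A)$. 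Hence $\tfrac{1}{\beta}\log q^{\beta} \to q$ uniformly, with $q(j,x) = A(j,x) + V(\phi_j(x)) - V(x) - m(A)$. Part (2) of the same Proposition identifies $V$ as a Lipschitz calibrated subaction, so $\sup_{j\in J} q(j,x) = 0$ for all $x$, and $q$ inherits Lipschitz continuity from $A$, $V$ and $\phi$. Therefore $(q_j)_{j\in J}$ is a normalized family of weights and $\mathcal{S}=(X,\phi,q)$ is a mpIFS.

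Next, from the LDP hypothesis and Varadhan's Lemma (Lemma \ref{lemma: Varadhan}), for every $f\in C(X,\mathbb{R})$ the limit
$$\Gamma(f) := \lim_{\beta \to +\infty}\frac{1}{\beta}\log \rho_{\beta A}(e^{\beta f}) = \sup_{x\in X}[f(x) - I(x)]$$
exists. Since $I$ is l.s.c. with $\inf_X I = 0$, the function $-I: X \to [-\infty,0]$ is u.s.c. with $\bigoplus_{x\in X}(-I)(x) = 0$, so by Theorem \ref{teo : densidade Maslov} the functional $\Gamma$ is an idempotent probability with density $-I$. (The three axioms of Definition \ref{def: Idempotent probability as a functional} can alternatively be checked as in Claim 2 of the proof of Theorem \ref{teo : irreducible}.)

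To prove invariance of $\Gamma$ under $\mathcal{M}_{\phi,q}^{0}$, I would substitute $e^{\beta f}$ for $f$ in \eqref{eq: rho invariant}, apply Fubini, take $\tfrac{1}{\beta}\log$ of both sides, and rewrite the inner integral as
$$\frac{1}{\beta}\log\int_X e^{\beta\bigl[\tfrac{1}{\beta}\log q_j^{\beta}(x) + f(\phi_j(x))\bigr]} d\rho_{\beta A}(x).$$
The uniform convergence $\tfrac{1}{\beta}\log q_j^{\beta} + f\circ \phi_j \to q_j + f\circ\phi_j$ and the existence of $\Gamma(q_j + f\circ\phi_j)$ combine with Lemmas \ref{lemma :tec1} and \ref{lemma: tec2} to show that this inner quantity converges to $\Gamma(q_j+f\circ\phi_j)$ uniformly in $j\in J$. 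Finally, Lemma \ref{lmms} turns the outer $\tfrac{1}{\beta}\log\int_J \cdot\, d\nu(j)$ into a supremum, yielding
$$\Gamma(f) = \sup_{j\in J}\Gamma(q_j + f\circ \phi_j),$$
which is precisely the invariance $\mathcal{M}_{\phi,q}^{0}(\Gamma) = \Gamma$. Theorem \ref{teo: main result} then gives the density equation $-I(x) = \sup_{z\in \Omega_q}[S_q(x,z) - I(z)]$, and $\Omega_q = \Omega_A$ by Theorem \ref{Mane ergodic}, establishing \eqref{eq : -I}.

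The only real obstacle is the interchange of $\tfrac{1}{\beta}\log$ with the double integral, but this is exactly the content of the three technical lemmas already developed for Theorem \ref{teo : irreducible}; the present statement is actually \emph{cleaner} than Theorem \ref{teo : irreducible} because the LDP hypothesis guarantees a single limit $\Gamma$ up front, so no diagonal/subsequence argument or uniqueness of invariant idempotent probability is needed.
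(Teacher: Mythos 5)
Your proposal is correct and follows essentially the same route as the paper's own proof: Varadhan's Lemma yields the functional $\Gamma$ with density $-I$, the uniform convergence $\frac{1}{\beta}\log q^{\beta}\to q = A + V\circ\phi - V - m(A)$ follows from the hypothesis on $h_{\beta A}$ together with $\frac{1}{\beta}\log\lambda_{\beta A}\to m(A)$, invariance of $\Gamma$ under $\mathcal{M}_{\phi,q}^0$ is obtained by the identical computation with Lemmas \ref{lmms}, \ref{lemma :tec1} and \ref{lemma: tec2} (which the paper compresses into a reference to Claim 3 of Theorem \ref{teo : irreducible}), and the conclusion comes from Theorem \ref{teo: main result}. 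The only difference is expository: you write out the invariance computation and the density identification explicitly where the paper merely cites its earlier argument.
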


\begin{proof}	Suppose that the family of invariant probabilities $(\rho_{_{\beta A}})_{\beta>0}$ satisfy a LDP and denote by $I$ its deviation function. Then $I$ is l.s.c and $\inf_{x\in X}I(x) = 0$. By applying Lemma \ref{lemma: Varadhan} we get that, for any $f\in C(X,\mathbb{R})$, there exists the limit $\Gamma(f) := \lim_{\beta\to+\infty}\frac{1}{\beta}\log(\rho_{_{\beta A}}(e^{\beta f}))$ and it satisfies
	\[\Gamma(f) =  \sup_{x\in X}[f(x)-I(x)].\]	
	Therefore, $\Gamma$ defines an idempotent probability with density $-I$ (observe that $-I$ is u.s.c and $\sup_{x\in X} -I(x) =0$).

	Suppose there exists the uniform limit $V = \lim_{\beta \to+\infty} \frac{1}{\beta}\log(h_{\beta A})$. Then $V$ is a calibrated subaction to $A$.
	We have, by definition of $q^{\beta}$,
	\[\frac{1}{\beta}\log (q^{\beta} (j,x)) = A(j,x) +\frac{1}{\beta}\log(h_{\beta A}(\phi_j(x))) -  \frac{1}{\beta}\log(h_{\beta A}(x)) -\frac{1}{\beta}\log(\lambda_{\beta A}).\]
	Then  $\frac{1}{\beta}\log (q^{\beta} (j,x)) \to q(j,x)$ uniformly, where $q(j,x) = A(j,x) + V(\phi_j( x)) - V(x) -m(A)$.
	As $V$ is a calibrated subaction, the function  $q$ defines a normalized family of weights. Let us consider the  mpIFS $\mathcal{S}=(X, \phi, q)$.
	
	\bigskip
	
	\noindent
	\textit{Claim:} $\Gamma$ defines an idempotent probability which is invariant for the operator $\mathcal{M}_{\phi,q}^0$.
		
		\noindent
	\textit{Proof of Claim:} It follows from the same reasoning as Claim 3 in the proof of Theorem \ref{teo : irreducible}.
	
	\bigskip

Finally, as $-I$ is the density of an  invariant idempotent probability for the mpIFS $\mathcal{S}=(X, \phi, q)$, we obtain, from Theorem \ref{teo: main result} that
	\[-I(x) =\sup_{z\in \Omega} [S_q(x,z) -I(z)]. \]
\end{proof} 	

Is a remarkable fact that equations \eqref{eq : -I} and \eqref{eq : V Mane} represents a kind of duality between calibrated subactions (associated to eigenfunction in zero temperature) and deviation functions (associated to Gibbs probabilities in zero temperature). On the other hand LDP provides a connection between Gibbs probabilities in thermodynamic formalism and invariant idempotent probabilities in mpIFS.

\end{document}